\newtheorem{thm}{Theorem}[section]
\newtheorem{cor}[thm]{Corollary}
\newtheorem{lem}[thm]{Lemma}
\newtheorem{prop}[thm]{Proposition}
\newtheorem*{lem*}{Lemma}
\newtheorem*{thm*}{Theorem}
\newtheorem*{prop*}{Proposition}
\theoremstyle{definition}
\newtheorem{eg}[thm]{Example}
\newcommand{\XSTREAMS}{\mathscr{U}}
\newcommand{\XSPACES}{\mathscr{T}}
\newcommand{\XPRESETS}{\mathscr{P}}
\newcommand{\XLATS}{\mathscr{K}}
\newcommand{\XMONOIDS}{\mathscr{M}}
\newcommand{\PROPUnderlyingMonoidalPreorderedSets}{Proposition 5.11, \cite{krishnan2009convenient}}
\newcommand{\ThmDiEmbed}{Theorem 2.5, \cite{fernandes2007classification}}
\newcommand{\PropTop}{Proposition 5.8, \cite{krishnan2009convenient}}
\newcommand{\PROPInclusions}{Proposition 5.6, \cite{krishnan2013cubical}}
\newcommand{\PropSD}{Proposition 7.4, 7.5, \cite{krishnan2013cubical}}
\newcommand{\THMPospaces}{Theorem 4.7, \cite{krishnan2009convenient}}
\newcommand{\PROPLocCompactStreams}{Theorem 5.4, \cite{krishnan2009convenient}}
\newcommand{\ThmDHomotopy}{Theorem 7.1, \cite{krishnan2013cubical}}
\newcommand{\THMVanKampen}{Theorem 1, \cite{krishnan2013cubical}}
\newcommand{\THMXClosed}{Theorem 5.12, \cite{krishnan2009convenient}}
\newcommand{\THMQCubical}{Theorem 4.18, \cite{jardine2006categorical}}
\newcommand{\THMTriEquivalence}{Theorem, \cite{jardine2002cubical}}
\newcommand{\CORTriEquivalence}{Theorem, \cite{jardine2002cubical}}
\newcommand{\THMRealizeEquivalence}{Theorem, \cite{jardine2002cubical}}
\newcommand{\THMAMS}{Theorem, \cite{jardine2002cubical}}
\newcommand{\THMTransfer}{Theorem 3.10, \cite{riehl2011algebraic}}
\newcommand{\THMPointwise}{Theorem 4.3, \cite{riehl2011algebraic}}
\newcommand{\THMClassicalEquivalence}{Theorem 4.3, \cite{riehl2011algebraic}}
\newcommand{\LEMQTCocontinuous}{Lemma 7.2, \cite{krishnan2013cubical}}
\newcommand{\LEMSdTri}{Lemma 7.2, \cite{krishnan2013cubical}}
\newcommand{\LEMMincubicalMonics}{Lemma 6.2, \cite{krishnan2013cubical}}
\newcommand{\EGMSpaces}{Examples 2.2 and 3.8, \cite{cole2006mixing}}
\theoremstyle{plain}
\newtheorem*{thm:m-cofibrant}{Theorem {\ref{thm:m-cofibrant}}}
\newcommand{\THMSimplicialClassicalEquivalence}{Theorem \S2.3, \cite{quillen1967homotopical}}
\newtheorem*{thm:simplicial.classical.equivalence}{\THMSimplicialClassicalEquivalence}
\newcommand{\EGSSets}{Examples 3.6 and 4.9, \cite{gambino2017frobenius}}
\newtheorem*{eg:ssets}{\EGSSets}
\newtheorem*{eg:m-spaces}{\EGMSpaces}
\newtheorem*{lem:mincubical.monics}{\LEMMincubicalMonics}
\newtheorem*{thm:x-closed}{\THMXClosed}
\newtheorem*{thm:pointwise}{\THMPointwise}
\newtheorem*{thm:transfer}{\THMTransfer}
\newtheorem*{thm:small-object-argument}{\THMTransfer}
\newtheorem*{lem:sd.tri}{\LEMSdTri}
\newtheorem*{lem:qt-cocontinuous}{\LEMQTCocontinuous}
\newtheorem*{prop:underlying.monoidal.preordered.sets}{\PROPUnderlyingMonoidalPreorderedSets}
\newtheorem*{prop:locally.compact.streams}{\PROPLocCompactStreams}
\newtheorem*{thm:pospaces}{\THMPospaces}
\newtheorem*{thm:diembed}{\ThmDiEmbed}
\newtheorem*{thm:van-kampen}{\THMVanKampen}
\newtheorem*{thm:q-cubical}{\THMQCubical}
\newtheorem*{thm:tri.equivalence}{\THMTriEquivalence}
\newtheorem*{cor:tri.equivalence}{\CORTriEquivalence}
\newtheorem*{thm:realize.equivalence}{\THMRealizeEquivalence}
\newtheorem*{thm:ams}{\THMAMS}
\newtheorem*{eg:mixing}{Example 2.2 from \cite{cole2006mixing}}
\newtheorem*{thm:classical-equivalence}{\THMClassicalEquivalence}
\newtheorem*{prop:hp}{Proposition \ref{prop:hp}}
\newtheorem*{cor:types}{Corollary \ref{cor:types}}
\newtheorem*{prop:extensions}{Proposition \ref{prop:extensions}}
\newtheorem*{cor:algebraic-q-cubical}{Corollary \ref{cor:algebraic-q-cubical}}
\newtheorem*{cor:algebraic-m-spaces}{Corollary \ref{cor:algebraic-m-spaces}}
\newtheorem*{thm:m-streams}{Theorem {\ref{thm:m-streams}}}
\newtheorem*{thm:q-bicubical}{Theorem {\ref{thm:q-bicubical}}}
\newtheorem*{thm:cubical.dihomotopy}{Theorem {\ref{thm:cubical.dihomotopy}}}
\newtheorem*{cor:cubical.dihomotopy}{Corollary {\ref{cor:cubical.dihomotopy}}}
\newtheorem*{prop:cubical.homotopies}{Proposition {\ref{prop:cubical.homotopies}}}
\newtheorem*{prop:nerve.cubcats}{Proposition {\ref{prop:nerve.cubcats}}}
\newtheorem*{prop:kan.cubcat}{Proposition {\ref{prop:kan.cubcat}}}
\newtheorem*{cor:kan.bicubcat}{Corollary {\ref{cor:kan.bicubcat}}}
\newtheorem*{thm:q-fibrant}{Theorem {\ref{thm:q-fibrant}}}
\newtheorem*{thm:q-semifibrant}{Theorem {\ref{thm:q-semifibrant}}}
\newtheorem*{thm:equivalence}{Theorem {\ref{thm:equivalence}}}
\newtheorem*{cor:type-theoretic}{Corollary {\ref{cor:type-theoretic}}}
\newtheorem*{cor:q-equivalences}{Corollary {\ref{cor:q-equivalences}}}
\newtheorem*{cor:m-equivalences}{Corollary {\ref{cor:m-equivalences}}}
\newtheorem*{cor:excision}{Corollary {\ref{cor:excision}}}
\newtheorem*{cor:equivalence}{Corollary {\ref{cor:equivalence}}}
\newtheorem*{cor:cubical.diequivalence}{Corollary {\ref{cor:cubical.diequivalence}}}
\newtheorem*{thm:cartesian.closed.streams}{\THMXClosed}
\newtheorem*{thm:whitehead}{Theorem \ref{thm:whitehead}}
\newtheorem*{prop:topological}{\PropTop}
\newtheorem*{prop:sd}{\PropSD}
\newtheorem*{prop:inclusions}{\PROPInclusions}
\newtheorem*{thm:d-homotopy}{\ThmDHomotopy}
\title{A Hurewicz model structure for directed topology}
\author{Sanjeevi Krishnan and Paige Randall North}
\begin{document}
\begin{abstract}
	This paper constructs an h-model structure for diagrams of streams, locally preordered spaces.
	Along the way, the paper extends some classical characterizations of Hurewicz fibrations and closed Hurewicz cofibrations.
	The usual characterization of classical closed Hurewicz cofibrations as inclusions of neighborhood deformation retracts extends.
	A characterization of classical Hurewicz fibrations as algebras over a pointed Moore cocylinder endofunctor also extends. 
	An immediate consequence is a long exact sequence for directed homotopy monoids, with applications to safety verifications for database protocols.
\end{abstract}
\maketitle
\tableofcontents
\addtocontents{toc}{\protect\setcounter{tocdepth}{1}}

\section{Introduction}
A complex process in nature can be described by a state space equipped with some kind of directionality reflecting the arrow of time.  
The qualitative behavior of such a process is often invariant with respect to \textit{dihomotopy equivalences}, deformations of the state space that respect the given directionality.
A precise definition of \textit{dihomotopy equivalence} depends upon the precise application of interest (cf. \cite{belton2019obstruction,bubenik2009context,fajstrup2006algebraic,gaucher2003model,grandis2003directed,grandis2005shape,goubault2017directed}).
This paper focuses on the simplest kind, a straightforward refinement of classical homotopy equivalence where all homotopies are taken to be \textit{dihomotopies}, or equivalently homotopies \textit{through} directed maps.  
Examples of behavior invariant in this sense include periodicity in certain dynamical systems \cite[Example 2.8]{fajstrup2003dicovering}, and some types of non-determinism in concurrent computations \cite{fajstrup2006algebraic}.
A model structure on a category of directed spaces, where the weak equivalences are dihomotopy equivalences, provides convenient structure for extending classical homotopical constructions for the directed setting. 
An example is an \textit{h-model structure}, a generalization of the Hurewicz model structure on spaces, for some suitable topologically enriched category of directed spaces.

The purpose of this paper is to construct an h-model structure for \textit{streams}, spaces equipped with cosheaves of local preorders.
There exist h-model structures in the literature for other formalisms of directed spaces \cite{gaucher2019six,kahl2006relative}, although one of these formalisms (\textit{pospaces}) is too restrictive to model the states of a looping process and the other two formalisms (\textit{multipointed d-spaces} and \textit{flows}) are not purely topological in the sense that they can be arbitrarily subdivided in time in a way that changes the directed space, even up to dihomotopy equivalence.
Streams, purely topological models of state spaces that can model looping processes, provides a convenient formalism for describing salient directed structure in order-theoretic terms.
For example, it is possible to identify a large class of state spaces where the global preorder together with the topology completely determines the local causal preorders \cite[Lemmas 4.2, 4.4]{krishnan2009convenient}
For another example, the local causal orders on a spacetime define such a cosheaf; in turn, the local causal orders on small enough open neighborhoods \cite[Theorem 1]{malament1977class}, and hence the entire cosheaf of a stream, completely determines the smooth, causal, conformal structure of the spacetime.  
For yet another example, group-freeness of fundamental monoids is a simple dihomotopical constraint on spacetimes and other streams covered by open sets whose local preorders are antisymmetric \cite[Lemma 2.18]{goubault2009covering}.
In practice, the existence of h-model structures on topologically enriched categories formally follows from the commutativity of certain filtered colimits with certain finite limits \cite[Corollary 5.23]{barthel2013construction}.
That kind of commutativity is difficult if not impossible to prove for streams because the cosheaf condition, a preservation of certain colimits, interacts poorly with limits.  

The existence proof for streams and more general $\indexcat{1}$-shaped diagrams instead relies on certain explicit characterizations of the (co)fibrations.
Classical h-fibrations can be characterized as algebras over the underlying pointed endofunctor of a certain monad, defined as a kind of mapping cocylinder based on \textit{Moore paths}  \cite{may1975classifying}.
Recent work extends that monad to other bicomplete topologically enriched categories containing an object that suitably behaves like the non-negative reals \cite{north2017type}.  
This paper recalls that construction and extends the classical characterization of h-fibrations, at least for streams [Theorem \ref{thm:fibrations}].
A characterization of classical closed Hurewicz cofibrations as neighborhood deformation retracts extends [Theorem \ref{thm:NDR}]; the proof mimics classical arguments, but also requires some non-formal properties of streams.
The characterization of cofibrations will turn out to explain why directed spaces in nature, such as spacetimes or state spaces of concurrent programs, almost never decompose into h-homotopy colimits of simpler directed subspaces.
The characterization also underscores the necessity of working in the setting of $\indexcat{1}$-shaped diagrams: it is impossible to bootstrap an h-model structure on streams for, say, based streams in such a way that based streams in nature are both fibrant and cofibrant [Example \ref{eg:hep.failure}].  
The desired existence follows [Theorem \ref{thm:h-streams}].  

Section \S\ref{sec:ditopology} recalls the definition and some properties of streams.  
Section \S\ref{sec:fibrations} defines and characterizes \textit{h-fibrations}.
Section \S\ref{sec:cofibrations} defines and characterizes \textit{h-cofibrations}.
Section \S\ref{sec:h} proves the existence of an h-model structure whose (co)fibrations are the h-(co)fibrations.
Running examples [Examples \ref{eg:spacetimes}, \ref{eg:classifying.streams}, \ref{eg:compactification}, \ref{eg:di-loops}, \ref{eg:based.classifying.streams}, \ref{eg:dihomotopy.monoids}, \ref{eg:group-completion}, \ref{eg:moore.dipaths}, \ref{eg:di-les}, \ref{eg:les}, \ref{eg:semihomological}] apply the main results to understand some basic properties of bigraded \textit{dihomotopy monoids} $\pi_{p,q}X$ of based streams $X$.
Section \S\ref{sec:formal.verification} sketches an example where an associated long exact sequence for $\pi_{p,q}$ streamlines geometric arguments of practical interest on state spaces.  
Proofs often directly cite formal properties of h-(co)fibrations in general bicomplete topologically enriched categories; the reader is referred elsewhere \cite{cole2006many,may2006parametrized,strom1967note} for the general theory.  
The main results of this paper carry over to other purely topological models of directed spaces like \textit{d-spaces} \cite{grandis2003directed} (that are not multipointed and with enrichment induced by forgetting to spaces).

\subsection*{Conventions}
Notate special categories and functors as follows.
\vspace{.1in}\\
\begin{tabular}{rll}
	$[n]$ & ordinal $\{0<1<\cdots<n\}$\\
  $\XPRESETS$ & preordered sets and monotone functions\\
	$\XLATS$ & connected, compact Hausdorff topological lattices and homomorphisms\\
  $\XSPACES$ & weak Hausdorff k-spaces and continuous maps\\
  $\XSTREAMS$ & weak Hausdorff k-streams and stream maps
\end{tabular}
\vspace{.1in}\\
Write $\star$ for a terminal object in a given category.
Let $\id_o$ denote the identity of an object $o$ in a given category.
Write $\pi_o$ for the component of a universal cone in a given category, evident from context, with codomain $o$.
Let $\indexcat{1}$ denote an arbitrary small category.  
The prefix $\indexcat{1}$ will indicate that an object is indexed by $\indexcat{1}$.  
For example, a \textit{$\indexcat{1}$-set} is a $\indexcat{1}$-shaped diagram in the category of sets and a \textit{$\indexcat{1}$-function} is a natural transformation of $\indexcat{1}$-sets.  
Write $[1]$ for the free category on the arrow $0\ra 1$.  
Write $\cat{2}^{\smallcat{1}}$ for the functor category of $\smallcat{1}$-shaped diagrams in $\cat{2}$ and natural transformations between them.  
Given an adjunction $F\dashv G$, the \textit{adjoint} of a morphism of the form $x\ra Gy$ or $Fx\ra y$ will denote the corresponding morphism of the other form; let $\adjoint{(\zeta)}$ denote the adjoint of a morphism $\zeta$ in a given category across a given adjunction.
The phrases \textit{left lifting property}, \textit{right lifting property} will be respectively abbreviated \textit{LLP}, \textit{RLP}.
Let $\R_+$ denote the subspace $[0,\infty)\subset\R$.


\section{Streams}\label{sec:ditopology}
This section recalls some of the theory of \textit{streams}; the reader is referred elsewhere for the point-set theory \cite{krishnan2009convenient}, including comparisons with other formalisms for directed spaces, and the associated homotopy theory \cite{krishnan2015cubical}.  
A \textit{circulation} on a space $X$ is a function
$$\leqslant:U\mapsto\;\leqslant_U$$
assigning to each open subset $U\subset X$ a preorder $\leqslant_U$ on $U$ such that $\leqslant$ sends the union of a collection $\mathcal{O}$ of open subsets of $X$ to the preorder with smallest graph containing the graph of $\leqslant_U$ for each $U\in\mathcal{O}$ \cite{krishnan2009convenient}.
A \textit{stream} is a space equipped with a circulation on it \cite{krishnan2009convenient}.

\begin{eg}
  \label{eg:initial.circulations}
  Every space admits an \textit{initial circulation} $\leqslant$ defined by
  \begin{equation*}
    x\leqslant_Uy\iff x=y\in U
  \end{equation*}
\end{eg}

\begin{eg}
	\label{eg:spacetimes}
	For each $n$-spacetime $M$, the function
  $$U\;\mapsto\;\leqslant_U$$
	assigning to each open subspacetime $U\subset M$ the causal order $\leqslant_U$ on $U$ defines a circulation on $M$.  
	In particular, the circle $\sphere^1$ will henceforth be regarded as a stream $\sphere^{(1,0)}$ by equipping it with the circulation defined by a time-orientation on $\sphere^1$.  
	On a sufficiently fine open cover, the circulation on $\sphere^1$ assigns each open subset a total order.
\end{eg}

A continuous map $f:X\ra Y$ of streams is a \textit{stream map} if $f(x)\leqslant_Uf(y)$ whenever $x\leqslant_{f^{-1}U}y$ \cite{krishnan2009convenient}.
A \textit{k-space} if $X$ is a colimit of compact Hausdorff spaces in the category of spaces and continuous maps.
Similarly, a \textit{k-stream} is a colimit of compact Hausdorff streams in the category of streams and stream maps \cite{krishnan2009convenient}.
The underlying space of a k-stream is a k-space because the forgetful functor from streams and stream maps to spaces and continuous maps is cocontinuous \cite[Proposition 5.8]{krishnan2009convenient}.
A space $X$ is \textit{weak Hausdorff} if images of compact Hausdorff spaces in $X$ are Hausdorff.

\begin{prop:locally.compact.streams}
  Locally compact Hausdorff streams are weak Hausdorff k-streams.
\end{prop:locally.compact.streams}

Let $\XSPACES$ denote the complete, cocomplete, and Cartesian closed \cite{mccord1969classifying} category of weak Hausdorff k-spaces and continuous maps between them.
Let $\XSTREAMS$ denote the category of weak Hausdorff k-streams and stream maps.
Redefine space and stream, like elsewhere (cf. \cite{krishnan2009convenient, may1999concise}), to means objects in the respective categories $\XSPACES$ and $\XSTREAMS$.
Let $\XLATS$ denote the connected compact Hausdorff topological lattices whose underlying spaces are connected and continuous lattice homomorphisms between them.
Such objects will henceforth be regarded as streams, as noted in the following special case of a more general observation \cite[Lemmas 4.2, 4.4 and Example 4.5]{krishnan2009convenient}.

\begin{thm}
  \label{thm:pospaces}
	There exists a full and faithful embedding
	$$\XLATS\ra\XSTREAMS$$
	naturally sending a topological lattice $L$ to its underlying space equipped with the unique circulation sending $L$ to the given partial order $\leqslant_L$ on $L$.
\end{thm}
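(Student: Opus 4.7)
The plan is to construct an identity-on-underlying-continuous-maps functor $F \colon \XLATS \to \XSTREAMS$ and verify it is full and faithful. For the object part, given $L \in \XLATS$, I would equip the underlying space with the circulation $\leqslant$ whose value at an open $U$ is the preorder generated by pairs $(x,y)$ with $x \leqslant_L y$ and closed lattice interval $[x,y]_L \subseteq U$. Verifying this really is a circulation reduces to the nontrivial cosheaf-like condition: given an open cover of an open $V$ and a pair $x \leqslant_L y$ with $x, y \in V$, one must chain the pair through finitely many sub-intervals each sitting inside a cover element. Compactness of $[x,y]_L$ together with continuity of $\wedge$ and $\vee$ supplies the required subdivision. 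By construction, the global value of this circulation at $U = L$ equals the given partial order, and uniqueness of any such circulation given its global value is the content of \cite[Lemma 4.2]{krishnan2009convenient}, applicable here via \cite[Example 4.5]{krishnan2009convenient}.

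For functoriality, a continuous lattice homomorphism $f \colon L \to L'$ is monotone and satisfies $f([x,y]_L) \subseteq [f(x), f(y)]_{L'}$; a routine check against the circulation above shows $f$ is a stream map. Faithfulness is transparent because morphisms in both categories are determined by their underlying continuous maps between underlying spaces.

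Fullness is the main obstacle. Given a stream map $g \colon FL \to FL'$, setting $U = L'$ in the stream-map condition immediately forces $g$ to be monotone. The delicate task is to deduce preservation of $\wedge$ and $\vee$. The strategy is to argue by contradiction: were $g(x \vee y) \neq g(x) \vee g(y)$ for some $x, y \in L$, one could separate these images by a carefully chosen open $U \subseteq L'$ and then lift the separation along $g$ using the constructed circulation to produce a comparable pair in $g^{-1}U$ that violates the stream-map condition. The essential technical input here is \cite[Lemma 4.4]{krishnan2009convenient}, which lets local lattice information be read off from the circulation, combined with continuity of $\wedge$ and $\vee$ on $L$ to furnish the test intervals needed to witness the failure.
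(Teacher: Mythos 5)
The paper itself offers no proof of this statement: it is imported verbatim as ``a special case of a more general observation'' from \cite[Lemmas 4.2, 4.4 and Example 4.5]{krishnan2009convenient}, so you are in effect reconstructing the cited material. Your treatment of the object part (an interval-generated circulation, with the cosheaf condition checked by compact subdivision and uniqueness delegated to Lemma 4.2), of functoriality, and of faithfulness is a reasonable outline of that reconstruction and raises no objection.

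The genuine gap is the fullness step, and it is not a gap you can close by sharpening the argument: the contradiction you propose to extract from $g(x\vee y)\neq g(x)\vee g(y)$ does not exist. The circulation on the image of a lattice $L$ records only order-theoretic data, so a stream map out of it is forced to be (locally, hence globally) monotone and continuous, but nothing obliges it to commute with $\wedge$ or $\vee$. Concretely, the averaging map $f(x,y)=\tfrac{1}{2}(x+y)\colon\I^2\to\I$ between the streams associated to the topological lattices $\I^2$ and $\I$ is continuous and monotone; given $p\leqslant_{f^{-1}U}q$, each link $[z_i,z_{i+1}]\subseteq f^{-1}U$ of a witnessing chain has connected image in $U$ containing $f(z_i)$ and $f(z_{i+1})$, hence containing $[f(z_i),f(z_{i+1})]$, so $f$ is a stream map. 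But $f$ does not preserve $\vee$ (evaluate at $(1,0)$ and $(0,1)$). Thus $\Hom_{\XLATS}(\I^2,\I)\to\Hom_{\XSTREAMS}(F\I^2,F\I)$ is not surjective when the morphisms of $\XLATS$ are continuous lattice homomorphisms, as the paper's conventions stipulate. The full-faithfulness available from the cited source identifies stream maps between such streams with \emph{monotone continuous} maps (the setting of compact pospaces); fullness does not pass to the non-full subcategory of lattice homomorphisms. Your proof should therefore either claim only faithfulness, or run the fullness argument against monotone maps --- your observation that taking $U=L'$ forces monotonicity is exactly the nontrivial half of that correct statement --- but it cannot establish fullness for the hom-sets of $\XLATS$ as literally defined.
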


\begin{eg}
	\label{eg:classifying.streams}
	Let $B\smallcat{1}$ denote the geometric realization of the simplicial nerve of a small category $\smallcat{1}$.  
	The construction $B$ preserves finite products and hence induces the structure of a topological lattice on the compact Hausdorff, connected space $BL$ natural in finite lattices $L$.
	In this manner, $B[n]$ naturally defines a stream [Theorem \ref{thm:pospaces}].
  Redefine $B\smallcat{1}$ as
  $$B\smallcat{1}=\colim_{[n]\ra\smallcat{1}}B[n]$$
	in $\XSTREAMS$.  
	This \textit{classifying stream} $B\smallcat{1}$ of a small category $\smallcat{1}$ encodes the directionality implicit in the arrows of $\smallcat{1}$ as the circulation on $B\smallcat{1}$.  
\end{eg}

The \textit{forgetful functor} $\XSTREAMS\ra\XSPACES$ lifts topological constructions in the following sense.

\begin{prop:topological}
  The forgetful functor makes $\XSTREAMS$ topological over $\XSPACES$.
\end{prop:topological}

In other words, each class of continuous maps $f_i:X\ra Y_i$ from a space $X$ to streams $Y_i$ induces a terminal circulation on $X$ making the $f_i$'s stream maps $X\ra Y_i$.
Equivalently and dually, each class of continuous maps from streams to a fixed space induces a suitably initial circulation on that space.
In particular, the forgetful functor $\XSTREAMS\ra\XSPACES$ creates limits and colimits.
The reader is referred elsewhere \cite{borceux1994handbook} for the basic theory of categories topological over other categories.
The \textit{forgetful functor} $\XSTREAMS\ra\XPRESETS$ to the category $\XPRESETS$ of preordered sets and monotone functions is the functor naturally sending a stream $X$ to its underlying set equipped with the preorder that the circulation $\leqslant$ on $X$ assigns to $X$ itself.

\begin{prop:underlying.monoidal.preordered.sets}
	The forgetful functor $\XSTREAMS\ra\XPRESETS$ is Cartesian monoidal.
\end{prop:underlying.monoidal.preordered.sets}

The forgetful functor $\XSTREAMS\ra\XPRESETS$ preserves those colimits of streams which are colimits of underlying sets \cite[Lemma 3.18]{krishnan2009convenient}; a special case is the following observation.

\begin{lem}
  \label{lem:colimit.global.preorder}
  The forgetful functor $\XSTREAMS\ra\XPRESETS$ preserves \ldots
  \begin{enumerate}
    \item \ldots coproducts; and
    \item \ldots quotients of streams by equivalence relations with closed graphs
  \end{enumerate}
\end{lem}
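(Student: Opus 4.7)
The plan is to reduce both claims to the already-cited \cite[Lemma 3.18]{krishnan2009convenient}, which asserts that the forgetful functor $\XSTREAMS\ra\XPRESETS$ preserves any colimit of streams whose underlying cocone is a colimit of underlying sets. Thus for each of (1) and (2), it suffices to exhibit the relevant colimit in $\XSTREAMS$ as a colimit of the underlying sets, and then invoke that lemma.

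For part (1), I would appeal to the cocontinuity of the forgetful functor $\XSTREAMS\ra\XSPACES$ \cite[Proposition 5.8]{krishnan2009convenient} to identify the underlying space of a coproduct of streams with the coproduct of their underlying spaces. Since coproducts in $\XSPACES$ are formed as disjoint unions of the underlying sets (suitably topologized), the underlying set of a coproduct of streams is the disjoint union of underlying sets, i.e. the coproduct in $\XSETS$. The cited lemma then yields preservation in $\XPRESETS$.

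For part (2), I would present the quotient of a stream $X$ by an equivalence relation $R$ with closed graph as the coequalizer in $\XSTREAMS$ of the two projections $R\rras X$, where $R$ is regarded as a substream of $X\times X$ via the restricted circulation. Again by cocontinuity of $\XSTREAMS\ra\XSPACES$, the underlying space of this coequalizer is the coequalizer of the two underlying continuous maps in $\XSPACES$. The substantive step is the set-level identification: for a weak Hausdorff k-space $X$ and a closed equivalence relation $R\subset X\times X$, the coequalizer in $\XSPACES$ has underlying set equal to the set-theoretic quotient $X/R$. This is where the closed-graph hypothesis is indispensable, because it is precisely what forces the set-theoretic quotient to already be weak Hausdorff, so that no further identifications are introduced when computing the coequalizer in $\XSPACES$. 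Granted this identification, \cite[Lemma 3.18]{krishnan2009convenient} again finishes the proof.

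The main obstacle is the set-theoretic identification just described. It is standard for compactly generated weak Hausdorff spaces under the closed-graph hypothesis, but I would take care to verify (or cite) it cleanly, since without closed graph the coequalizer in $\XSPACES$ could quotient further than the set-theoretic quotient and the lemma could not be applied directly.
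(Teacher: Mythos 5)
Your reduction to \cite[Lemma 3.18]{krishnan2009convenient} is exactly how the paper treats this statement: it offers no separate proof, simply noting the lemma is a special case of that cited result, since in both cases the underlying cocone of sets is a colimit of sets (disjoint union for coproducts; set-theoretic quotient for quotients by closed equivalence relations, where the closed graph guarantees weak Hausdorffness and hence no further identifications in $\XSPACES$). Your elaboration of the set-level identifications is correct and fills in the details the paper leaves implicit.
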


Say that a stream map $f:X\ra Y$ \textit{pushes forward} the circulation on $X$ to the circulation on $Y$ if $x\leqslant_{f^{-1}U}y$ whenever $f(x)\leqslant_Uf(y)$ for all choices of $x,y\in X$ and open $U\subset Y$.  

\begin{lem}
  \label{lem:topological.tensors}
	Consider a stream $X$ and space $Y$.  
	The stream map
	\begin{equation*}
		\amalg_{y\in Y}X\xra{\amalg_y\id_X\times(\{y\}\ira Y)}X\times Y
	\end{equation*} 
	pushes forward the circulation on its domain to the circulation on its codomain.  
\end{lem}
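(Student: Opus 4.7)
\emph{Proof plan.} The strategy is to describe both the coproduct circulation on $\amalg_y X$ and the product circulation on $X\times Y$ explicitly, and then observe that the map $f=\amalg_y \id_X\times(\{y\}\ira Y)$ identifies them. Each inclusion $\{y\}\ira Y$ is automatically a stream map, so each $\id_X\times(\{y\}\ira Y)$ is a stream map and the universal property of the coproduct in $\XSTREAMS$ implies that $f$ itself is a stream map; only the reverse inequality, the pushforward property, requires work.

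First I would unpack the coproduct circulation. Each open subset of $\amalg_y X$ has the form $\bigsqcup_y V_y$ with $V_y\subset X$ open, and the preorder assigned to it is the disjoint union of the $\leqslant_{V_y}$'s. Hence, writing $W_y:=\{x\in X:(x,y)\in W\}$ for the slice of an open $W\subset X\times Y$ at $y\in Y$, the preimage $f^{-1}W$ is $\bigsqcup_y W_y$, and $(x_1,y_1)\leqslant_{f^{-1}W}(x_2,y_2)$ iff $y_1=y_2=y$ and $x_1\leqslant_{W_y}x_2$ in $X$.

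Next I would describe the product circulation on $X\times Y$ by cosheaf-theoretic reduction to basic open rectangles. A basic open $U\times V\subset X\times Y$, viewed as a product of substreams $U\subset X$ and $V\subset Y$, carries the coordinate-wise preorder $(x,v)\leqslant(x',v')\iff x\leqslant_U x'$ and $v=v'$, by \PROPUnderlyingMonoidalPreorderedSets together with the fact that $V$ inherits the initial circulation from $Y$. Writing a general open as $W=\bigcup_\alpha(U_\alpha\times V_\alpha)$, the cosheaf condition says that $(x,y)\leqslant_W(x',y')$ iff there is a finite chain of basic-rectangle relations connecting the two points. Such a chain forces the second coordinate to be constant, $y=y'$, and reduces to a chain $x=x_0,x_1,\ldots,x_n=x'$ in $X$ with $x_i\leqslant_{U_{\alpha_i}} x_{i+1}$ for basic opens $U_{\alpha_i}\times V_{\alpha_i}\subset W$ containing $y$ in their second factor. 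Since $W_y=\bigcup\{U_\alpha:y\in V_\alpha\}$, the cosheaf condition for the circulation on $X$ identifies this chain condition with $x\leqslant_{W_y}x'$.

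Combining the two descriptions yields $f(x_1,y_1)\leqslant_W f(x_2,y_2)\iff(x_1,y_1)\leqslant_{f^{-1}W}(x_2,y_2)$, which contains the pushforward property. The main obstacle is the chain argument above: matching the generation of $\leqslant_W$ from basic rectangles against the generation of $\leqslant_{W_y}$ from the first factors of those rectangles containing $y$. This hinges on the compatibility of products in $\XSTREAMS$ with passage to open substreams --- that a basic rectangle $U\times V\subset X\times Y$ has its circulation agreeing with the product circulation of the substreams $U$ and $V$ --- which is what makes the coordinate-wise characterization of its preorder applicable.
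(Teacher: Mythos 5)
Your proposal is correct and follows essentially the same route as the paper: on a basic open rectangle $U\times V$ the preorder is coordinatewise by the Cartesian monoidality of the forgetful functor to preordered sets, the $Y$-coordinate relation collapses to equality by initiality of the circulation on $Y$, and this matches the coproduct preorder by preservation of coproducts. The only difference is that you spell out the cosheaf-generation/chain argument reducing general opens to basic rectangles, which the paper's proof leaves implicit by checking only the case $W=U\times V$.
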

\begin{proof}
	Take open substreams $U\subset X$ and $V\subset Y$.  
	For $x_1,x_2\in U$ and $y_1,y_2\in V$,
	\begin{align*}
		      (x_1,y_1)\leqslant_{U\times V}(x_2,y_2)
		&\iff x_1\leqslant_Ux_2,\;y_1\leqslant_Vy_{2}
		\\
  	&\iff x_1\leqslant_Ux_2,\;y_1=y_2
		\\
		&\iff (x_1,y_1)\leqslant_{\amalg_{y\in V}U}(x_2,y_2)
	\end{align*}
	with the first line by the forgetful functor $\XSTREAMS\ra\XPRESETS$ Cartesian monoidal [\PROPUnderlyingMonoidalPreorderedSets], the second line by the initiality of the circulation on $Y$, and the third line by the forgetful functor $\XSTREAMS\ra\XPRESETS$ preserving coproducts [Lemma \ref{lem:colimit.global.preorder}].
\end{proof}

A \textit{substream} of a stream $Z$ is a stream $Y$ for which inclusion of underlying sets defines a stream map $Y\ra Z$ such that every stream map $X\ra Z$ whose image is a subset of $Y$ corestricts to a stream map $X\ra Y$. 
Every subset of a stream has the structure of a substream, where the topology is the k-ification of the subspace topology, by the forgetful functor $\XSTREAMS\ra\XSPACES$ topological.  
The circulation of a substream is generally difficult to ascertain; an exception is when the substream defines an open subset.  

\begin{eg}
  An open substream is an open subspace with a restricted circulation.
\end{eg}

Let $X\ira Y$ denote an inclusion of a substream $X$ into a stream $Y$.  
More generally let $X\ira Y$ denote a $\indexcat{1}$-stream map $X\ra Y$ which, up to isomorphism of $\indexcat{1}$-streams evident from context, is an objectwise inclusion of streams.

\begin{thm:x-closed}
  The category $\XSTREAMS$ is Cartesian closed.
\end{thm:x-closed}

\begin{cor}
	The category $\XSTREAMS^{\indexcat{1}}$ is complete, cocomplete, and Cartesian closed.
\end{cor}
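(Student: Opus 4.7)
The plan is a standard two-part reduction: each property of $\XSTREAMS^{\indexcat{1}}$ will be deduced from the corresponding property of $\XSTREAMS$, which is already available in the preceding text. Essentially nothing is deep here; the entire argument is formal, and the main work is bookkeeping for the pointwise constructions and the end formula for the exponential.

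First, I would record that $\XSTREAMS$ itself is complete and cocomplete: the forgetful functor $\XSTREAMS\to\XSPACES$ is topological [\PropTop], so it creates all limits and colimits, and $\XSPACES$ is bicomplete. Once $\XSTREAMS$ is bicomplete, completeness and cocompleteness of $\XSTREAMS^{\indexcat{1}}$ follow by the usual pointwise construction: for any small diagram $D:\mathcal{J}\to\XSTREAMS^{\indexcat{1}}$ set $(\lim D)(i)=\lim_{j\in\mathcal{J}}D(j)(i)$ and $(\colim D)(i)=\colim_{j\in\mathcal{J}}D(j)(i)$, with functoriality in $i$ induced from the universal properties of the pointwise (co)limits. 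A (co)cone on $D$ in $\XSTREAMS^{\indexcat{1}}$ is just a natural family of (co)cones at each $i$, so the universal property lifts directly from $\XSTREAMS$.

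For Cartesian closedness, I would invoke [\THMXClosed] to obtain an internal hom $(-)^{(-)}$ on $\XSTREAMS$, then define the exponential in $\XSTREAMS^{\indexcat{1}}$ by the standard end formula
$$Y^X(i)\;=\;\int_{j\in\indexcat{1}}\;Y(j)^{\,\indexcat{1}(i,j)\cdot X(j)},$$
where $\indexcat{1}(i,j)\cdot X(j)$ denotes the copower, that is, the coproduct in $\XSTREAMS$ of $\indexcat{1}(i,j)$-many copies of $X(j)$. The end exists by completeness of $\XSTREAMS$, and functoriality of $Y^X$ in $i$ follows from the dinaturality of the end in $j$. Verifying the adjunction $\Hom_{\XSTREAMS^{\indexcat{1}}}(Z\times X,Y)\cong\Hom_{\XSTREAMS^{\indexcat{1}}}(Z,Y^X)$ is then a routine calculation that combines the Cartesian closure of $\XSTREAMS$ with the Yoneda-style expression of a natural transformation as an end over $\indexcat{1}$.

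The main obstacle, such as there is one, is simply keeping the (co)end and copower indexing straight; no nonformal property of $\XSTREAMS$ is needed beyond bicompleteness and the already-cited Cartesian closure. In fact, once $\XSTREAMS$ is known to be bicomplete and Cartesian closed, the result can be quoted from the general theory of functor categories into bicomplete Cartesian closed categories.
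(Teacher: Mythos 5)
Your argument is correct and is exactly the standard formal proof the paper leaves implicit: bicompleteness of $\XSTREAMS$ via the topological forgetful functor to $\XSPACES$, pointwise (co)limits in the functor category, and the usual end formula for the exponential in $\XSTREAMS^{\indexcat{1}}$ using the Cartesian closure of $\XSTREAMS$. No nonformal input about streams is needed, and your reduction to the general theory of functor categories into bicomplete Cartesian closed categories is the intended route.
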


Henceforth $\XSTREAMS^{\indexcat{1}}$ will be regarded as Cartesian monoidal. 
For each $\indexcat{1}$-stream $X$, write $X^{(-)}$ for the right adjoint to $X\times-$.
A \textit{based stream} $(X,x)$ is a stream $X$ equipped with a distinguished point $x\in X$, regarded as the $[1]$-stream $\{x\}\ira X$; a \textit{based stream map} is a $[1]$-stream map of based streams.  
Henceforth regard a stream as a constant $\indexcat{1}$-stream and a $\indexcat{1}$-space as a $\indexcat{1}$-stream with objectwise initial circulations.  

\begin{eg}
	\label{eg:compactification}
	Let $\I^{(1,0)}$ denote the unit interval with the usual ordering.
  Let
	$$\I^{(p,q)}=(\I^{(1,0)})^p\times\I^q\quad\sphere^{(p,q)}=\quotient{\I^{(p,q)}}{\partial\I^{p+q}},$$
	where $\partial\I^n=\{(x_1,\ldots,x_n)\in\I^n\;|\;\prod_ix_i(1-x_i)=0\}$.  
	In a sense that can be made precise, $\sphere^{(p,q)}$ is the terminal compactification of the $(p+q)$-dimensional ordered vector space whose positive cone is the product $\R_{\geqslant 0}^p$; the quotiented point will thus be denoted by $\infty$.
	The underlying space of $\sphere^{(p,q)}$ is the $(p+q)$-sphere.
\end{eg}

\begin{eg}
	\label{eg:di-loops}
	Let $\Omega^{(p,q)}_\star(X,x)$ denote the based mapping substream
  \begin{equation}
		\label{eqn:di-loops}
		\Omega^{(p,q)}_\star(X,x)=(X,x)^{(\sphere^{(p,q)},\infty)}
	\end{equation}
	by regarding based streams as $[1]$-streams.
	In other words, (\ref{eqn:di-loops}) is the stream of all based $(p+q)$-fold loops directed in its first $p$ suspension coordinates.
\end{eg}

In particular, $\XSTREAMS^{\indexcat{1}}$ is bicomplete $\XSPACES$-enriched with enrichment and cotensor defined by restrictions of the closed structure and tensor defined by restriction of binary products.  
The \textit{dihomotopy} relation can be defined in terms of this enrichment.  
Fix $\indexcat{1}$-stream maps $f,g$ as in the left of the diagrams
\begin{equation*}
  \xymatrix@C=3pc{
		**[l]Y\ar[r]^-{f}\ar[d]_-{Y\times(\{0\}\ira\I)}
    & **[r]Z
    \\
      **[l]Y\times\I\ar@{.>}[ur]|{h}
			& **[r]Y\ar[l]^-{Y\times(\{1\}\ira\I)}\ar[u]_-{g}
  }\quad
  \xymatrix@C=3pc{
		**[l]X\times\I\ar[r]^{\pi_X}\ar[d]_-{i\times\I}
    & **[r]X\ar[d]^-{fe=ge}
    \\
      **[l]Y\times\I\ar[r]_-{h}
    & **[r]Z
  }
\end{equation*}
Write $h:f\sim g$ for a dotted $\indexcat{1}$-stream map making the left of the diagrams commute.
For a $\indexcat{1}$-stream map $i:X\ra Y$, write $h:f\sim g$ relative $i$ if the right diagram commutes.

\begin{eg}
	\label{eg:based.classifying.streams}
  For a monoid $M$, the image of the stream map
	$$B(0\ra M):\star\ra BM$$
	naturally turns the classifying stream $BM$ [Example \ref{eg:classifying.streams}] into a \textit{based classifying stream} $B_\star M$.
\end{eg}

Call a $\indexcat{1}$-stream map $f:X\ra Y$ \textit{h-acyclic} if there exists a $\indexcat{1}$-stream map $g:Y\ra X$ with $gf\sim\id_X$ and $fg\sim\id_Y$. 
An \textit{h-equivalence} of $\indexcat{1}$-streams is an h-acyclic $\indexcat{1}$-stream map.  
The working definition of a dihomotopy equivalence in this paper is an h-equivalence of $\indexcat{1}$-streams.

\begin{eg}
	\label{eg:dihomotopy.monoids}
	Define a set $\pi_{p,q}(X,x)$, natural in based streams $(X,x)$, by
	$$\pi_{p,q}(X,x)=\pi_0\Omega^{(p,q)}_\star(X,x),$$
	where $\pi_0$ denotes the ordinary path-component construction.
	Equivalently, $\pi_{p,q}(X,x)$ is the set of all $\sim$-classes of based stream maps $(\sphere^{(p,q)},\infty)\ra(X,x)$ from the stream $\sphere^{(p,q)}$ based at $\infty$ [Example \ref{eg:compactification}] .
  This set \ldots
  \begin{enumerate}
  \item \ldots defines a monoid for $p+q>0$ induced by the co-H structure on based spheres
	\item \ldots is commutative for $p+q>1$ by an Eckmann-Hilton argument
	\item \ldots is a group for $p>1$ or $q>0$ because commuting suspension coordinates gives inverses in the first case and reversing a map gives inverses in the second case.
  \end{enumerate}
  The construction $\pi_{p,q}$ unifies some invariants in the literature.  
	The monoid $\pi_{1,0}X$ is called the \textit{fundamental monoid} of $X$ \cite{fajstrup2006algebraic}.  
	More generally, the groups $\pi_{n,0}X$ have been introduced previously \cite{grandis2002directed} as \textit{higher homotopy monoids}, at least with respect to a homotopy relation that coincides with h-homotopy on examples of interest \cite[Theorem 7.1 ]{krishnan2015cubical}.  
  The groups $\pi_{0,n}X$ are the homotopy groups of the underlying based space of $X$. 
	Stream maps $\sphere^{(p,q+i)}\ra\sphere^{(p+i,q)}$ defined by identity functions induce natural transformations $\pi_{p+i,q}\ra\pi_{p,q+i}$.  
\end{eg}

\begin{eg}
	\label{eg:group-completion}
	Directed simplicial approximation \cite[Theorem 8.1]{krishnan2015cubical} gives
	$$\pi_{1,0}B_\star M=M$$
	for each discrete monoid $M$. 
	The natural homomorphism $\pi_{1,0}B_\star M\ra\pi_{0,1}B_\star M$ is then group-completion $M\ra M[M^{-1}]$.  
\end{eg}

\section{Fibrations}\label{sec:fibrations}
Call a $\indexcat{1}$-stream map $f$ an \textit{h-fibration} if $f$ has the RLP against $X\times(\{0\}\ira\I)$ for each $\indexcat{1}$-stream $X$.
The purpose of this section is to characterize the h-fibrations of $\indexcat{1}$-streams as algebras over the underlying pointed endofunctor of a \textit{Moore path monad} $\Gamma$, defined as follows.  
Define stream $\Pi X$ and stream maps $\mathrm{cod}_{\Pi X}$, $\mathrm{dom}_{\Pi X}$, $\mathrm{id}_{\Pi X}$ by the following commutative diagram below, where the outer right rectangle is a pullback diagram defining $\Pi X$ as a pullback, natural in streams $X$.
\begin{equation*}
		\begin{tikzcd}
       X
		 & \Pi X
			   \ar{rrrr}[above]{\mathrm{cod}_{\Pi X}}
				 \ar[dd,hookrightarrow]
				 \ar{l}[above]{\mathrm{dom}_{\Pi X}}
     &
     &
     &
     & X
		     \ar{dd}[right]{X^{\R_{+}\ra\star}}
     \\
     &
		 &
		 & X
		     \ar{ull}[description]{\mathrm{id}_{\Pi X}}
				 \ar{urr}[description]{\id_X}
				 \ar{dll}[description]{X^{\R_{+}\ra\star}\times(\{0\}\ira\R_{+})}
     \\
       X^{\R_{+}}
			   \ar{uu}[left]{X^{\{0\}\ira\R_{+}}}
		 & X^{\R_{+}}\times\R_{+}
		     \ar{rrrr}[below]{\adjoint(X^{\max:\R_{+}^2\ra\R_{+}})}
				 \ar{l}[below]{\pi_{X^{\R_{+}}}}
		 &
		 &
     &
     & X^{\R_{+}}
	 \end{tikzcd}
\end{equation*}

In other words, $\Pi X$ is a stream of all pairs $(\zeta,t)$ of stream map $\zeta:\R_{+}\ra X$, with $\R_{+}$ regarded as a stream with initial circulation, and $t\in\R_{+}$ with $\zeta$ constant on $[t,\infty)$.  
Thus $\Pi X$ is the classical \textit{Moore path space} (cf \cite{barthel2013construction}) of the underlying space of $X$ equipped with a natural circulation.  
This Moore path space underlying $\Pi X$ is the morphism part of a \textit{Moore path category}, a topological category with identity, codomain, and domain structure maps respectively given by $\mathrm{id}_{\Pi X}$, $\mathrm{cod}_{\Pi X}$, and $\mathrm{dom}_{\Pi X}$.

\begin{prop}
	\label{prop:composition}
	There exists a stream map 
	$$\circ_{\Pi X}:(\Pi X)\times_{\mathrm{cod}_{\Pi X},\mathrm{dom}_{\Pi_X}}(\Pi X)\ra\Pi X$$
	natural in streams $X$, defining the composition operation of a category internal to $\XSTREAMS$ with object stream $X$, morphisms stream $\Pi X$, and identity, domain, and codomain structure maps given by $\mathrm{id}_{\Pi X},\mathrm{cod}_{\Pi X},\mathrm{dom}_{\Pi X}$.
\end{prop}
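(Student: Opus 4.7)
The plan is to construct $\circ_{\Pi X}$ via the Cartesian closed structure of $\XSTREAMS$ and then verify the internal category axioms following Moore's classical argument, wherein strict associativity is afforded by the additive time parameter. First I would construct the adjoint stream map
$$\tilde c\colon(\Pi X\times_X\Pi X)\times\R_+\ra X$$
sending $((\zeta_1,t_1),(\zeta_2,t_2),s)$ to $\zeta_1(s)$ when $s\leqslant t_1$ and to $\zeta_2(s-t_1)$ when $s\geqslant t_1$; the two branches agree at $s=t_1$ because $\zeta_1(t_1)=\zeta_2(0)$ by the pullback condition defining $\Pi X\times_X\Pi X$. Continuity of the underlying function then follows from the pasting lemma applied to the closed cover $\{s\leqslant t_1\}\cup\{s\geqslant t_1\}$ together with the classical continuity of Moore path evaluation.

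The main obstacle is verifying that $\tilde c$ is actually a stream map rather than merely continuous. I would unwind the induced circulation using [\PropTop] (so that limit cones create initial circulations), Cartesian closure (which identifies $\zeta\leqslant\zeta'$ in $X^{\R_+}$ with pointwise comparability of $\zeta(s)$ and $\zeta'(s)$ in $X$), and [Lemma \ref{lem:topological.tensors}] (which, since $\R_+$ has the initial circulation, forces equality of time coordinates under comparability in any product with $\R_+$). The upshot is that a comparability $((\zeta_1,t_1),(\zeta_2,t_2),s)\leqslant((\zeta_1',t_1'),(\zeta_2',t_2'),s')$ forces $t_1=t_1'$, $t_2=t_2'$, and $s=s'$, so both sides of $\tilde c$ fall on the same branch of the piecewise formula, and the required comparison $\tilde c(\ldots)\leqslant_U\tilde c(\ldots')$ in $X$ reduces to pointwise comparability $\zeta_i(\sigma)\leqslant\zeta_i'(\sigma)$ at a single $\sigma\in\{s,s-t_1\}$, which is part of the hypothesis.

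The transpose of $\tilde c$, paired with the sum map $\R_+\times\R_+\ra\R_+$ applied to the time parameters, yields a stream map $\Pi X\times_X\Pi X\ra X^{\R_+}\times\R_+$. To factor through the inclusion $\Pi X\ira X^{\R_+}\times\R_+$ I would verify that for $s\geqslant t_1+t_2$ the concatenated path is constantly $\zeta_2(t_2)=\mathrm{cod}_{\Pi X}(\zeta_2,t_2)$, which holds because $\zeta_2$ is constant on $[t_2,\infty)$. Naturality of $\circ_{\Pi X}$ in $X$ is automatic because pullbacks, exponentials, and the sum on $\R_+$ all are. Finally, the internal category axioms are verified on underlying sets: left and right unitality hold because concatenation with the length-zero identity $\mathrm{id}_{\Pi X}(x)=(\mathrm{const}_x,0)$ preserves both the time parameter and the underlying function, and strict associativity holds because time addition is associative in $\R_+$ and the threefold piecewise concatenation rebrackets to the same function on $\R_+$.
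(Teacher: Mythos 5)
Your overall architecture (build the adjoint $\tilde c$, check continuity classically, transpose and pair with time addition, verify the internal category axioms on underlying sets) is reasonable, and the first and last paragraphs are essentially fine. The gap is in the middle paragraph, which is exactly where the content of the proposition lies. First, the parenthetical claim that Cartesian closure ``identifies $\zeta\leqslant\zeta'$ in $X^{\R_+}$ with pointwise comparability'' is not available: a circulation is a local structure, and for a \emph{given} open $U\subset X$ the evaluation map only yields $\zeta(\sigma)\leqslant_U\zeta'(\sigma)$ from a comparability $\zeta\leqslant_W\zeta'$ when $W\times V\subset\mathrm{ev}^{-1}(U)$ for some open $V\ni\sigma$. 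The hypothesis of the stream-map condition for $\tilde c$ only hands you a comparability witnessed in some open subset of $\tilde c^{-1}(U)$, whose projection to a path factor need not satisfy any such containment; so the asserted reduction ``to pointwise comparability at a single $\sigma$, which is part of the hypothesis'' does not go through as stated. Second, you implicitly read off the circulation on $\Pi X\times_X\Pi X$ from the ambient product $(X^{\R_+}\times\R_+)^2$; this is a non-open substream, and the paper explicitly cautions that circulations of non-open substreams are generally difficult to ascertain. (Your argument could probably be repaired by choosing the open cover of $\tilde c^{-1}(U)$ adaptively -- boxes small enough that their path factors already land in $\mathrm{ev}^{-1}(U)$ after the appropriate time shift, with separate care at the branch locus $s=t_1$ -- but that is precisely the work that is missing.)

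The paper sidesteps all of this with a different decomposition: it exhibits $(\Pi X)\times_{\mathrm{cod}_{\Pi X},\mathrm{dom}_{\Pi X}}(\Pi X)$ as the bijective image of the coproduct $\amalg_{a,b}\,X^{[0,a]}\times_X X^{[0,b]}$ of streams of paths of \emph{fixed} lengths, uses Lemma \ref{lem:topological.tensors} to show this bijection pushes the circulation forward, and then observes that on each summand composition is just $X^{[0,a]}\times_X X^{[0,b]}\cong X^{[0,a+b]}\ra X^{\R_{+}}\times\R_{+}$ induced by the interval identification $[0,a]\cup[0,b]\cong[0,a+b]$, a stream map for purely functorial reasons. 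If you want a complete argument, I recommend adopting that pushforward step rather than analyzing the mapping-stream circulation pointwise.
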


In other words, $X$ and $\Pi X$ are respectively the object and morphisms streams of a category internal to $\XSTREAMS$; the composite $(\zeta_2,t_2)\circ_{\Pi X}(\zeta_1,t_1)=(\zeta_1*\zeta_2,t_1+t_2)$, where $\zeta_1*\zeta_2$ informally is the map $\R_+\ra X$ which first executes the path $\zeta_1$ on $[0,t_1]$, then executes the path $\zeta_2$ on $[t_1,t_1+t_2]$, and then stays constant on $[t_1+t_2,\infty)$.  
This operation, continuous, defines the composition of a classical Moore path category (cf \cite{barthel2013construction}). 
The main task of the proof is to demonstrate that this operation defines a stream map.

\begin{proof}
	For $t\in\R_{+}$, define continuous maps
  \begin{align*}
    i_t&:\{t\}\ira\R_{+}\\
    r_t=\min(-,t)&:\R_{+}\ra[0,t]\\
		\delta_{-t}&:\{0\}\ira[0,t]\\
		\delta_{+t}&:\{t\}\ira[0,t]
	\end{align*}

	Let $a,b\in\R_{+}$.
  Consider the solid morphisms in the diagram
  \begin{equation*}
	  \begin{tikzcd}
			  X^{[0,a]}\times X^{[0,b]}
				\ar{d}[description]{(X^{r_a}\times i_{a})\times(X^{r_b}\times i_{b})}
				\ar[dr,phantom,"I"]
			& X^{[0,a]}\times_{X^{\delta_{+a}},X^{\delta_{-b}}}X^{[0,b]}
				\ar{rr}[above]{\cong}
				\ar[d,dotted] 
				\ar[l,hookrightarrow]
			& \ar[d,phantom,"II"]
			& X^{[0,a+b]}
			  \ar[d,dotted]
				\ar[r,equals]
				\ar[dr,phantom,"III"]
			& X^{[0,a+b]}
			  \ar{d}[description]{X^{r_{a+b}}\times i_{a+b}}
			\\
			  (X^{\R_{+}}\times\R_{+})^2
			& \Pi X\times_{\mathrm{cod}_{\Pi X},\mathrm{dom}_{\Pi X}}\Pi X
		  	\ar[dotted]{rr}[below]{\circ_{\Pi X}}
				\ar[l,hookrightarrow]
			& \;
			& \Pi X
			\ar[r,hookrightarrow]
			& X^{\R_{+}}\times\R_{+}
 	  \end{tikzcd}
  \end{equation*}
	where $\cong$ denotes the isomorphism induced from the identification $[0,a]\cup_{1,0}[0,b]\cong[0,a+b]$ induced from inclusion $[0,a]\ira[0,a+b]$ and the embedding $x\mapsto x+a:[0,b]\ira[0,a+b]$.
	There exist dotted vertical functions, stream maps by universal properties of substreams and unique by monicity of the bottom left and right arrows, making I and III commute.
	There exists a bottom dotted horizontal continuous map, defined by composition of paths in classical Moore path categories, making II commute in $\XSPACES$.

	The coproduct over all $a,b$ of the leftmost vertical arrow, and hence also the leftmost dotted arrow, are objectwise bijective stream maps pushing forward objectwise circulations of their domains onto objectwise circulations of their codomains [Lemma \ref{lem:topological.tensors}].
	It follows that $\circ_{\Pi X}$ is a stream map because the middle top horizontal arrow is a stream map for each $a,b$. 
	The maps $\circ_{\Pi X},\id_{\Pi X},\mathfrak{cod}_{\Pi X},\mathfrak{dom}_{\Pi X}$ are the respective composition, identity, domain, and codomain operations for a category internal to $\XSPACES$, the classical Moore path category.
	Hence those stream maps satisfy the requisite associativity and unitality properties for the composition, domain, and codomain operations of a category internal to $\XSTREAMS$.  
\end{proof}

\begin{eg}
  \label{eg:moore.dipaths}
  \textit{Moore dipaths} on a stream $X$ can be defined as a subcategory
  $$\Tau X\subset\Pi X,$$
  with domain and codomain stream maps $\mathfrak{dom}_{\Tau X},\mathfrak{cod}_{\Tau X}:\Tau X\ra X$ defined by replacing mapping streams from the undirected reals with mapping streams from the reals equipped with its usual ordering.
\end{eg}

More generally, let $\Pi X$ and $\mathrm{cod}_{\Pi X}$, $\mathrm{dom}_{\Pi X}$, $\mathrm{id}_{\Pi X}$, $\circ_{\Pi X}$ denote the induced constructions of $\indexcat{1}$-stream and $\indexcat{1}$-stream maps, natural in $\indexcat{1}$-streams $X$.
These structure maps turn $\Pi X$ into the morphism $\indexcat{1}$-stream of a category internal to $\XSTREAMS^{\indexcat{1}}$ by naturality.
A construction like $\Pi$ formally yields an associated algebraic weak factorization system (cf. \cite{north2017type,van2012topological}), detailed as follows.  
Define $\indexcat{1}$-stream $X\times_f\Pi Y$ and $\indexcat{1}$-stream maps $\mathrm{dom}_{\Pi Y}^*f,f^*\mathrm{dom}_{\Pi Y},Lf,\Gamma f$ by the following commutative diagrams, natural in $\indexcat{1}$-stream maps $f:X\ra Y$, in which the outer rectangle in the left diagram is a pullback diagram.
  \begin{equation*}
		\begin{tikzcd}
       X\times_{f}\Pi Y
			 \ar{rr}[above]{\mathrm{dom}_{\Pi Y}^*f}
			 \ar{dd}[left]{f^*\mathrm{dom}_{\Pi Y}}
     &
		 & {\Pi Y}
		   \ar{dd}[right]{\mathrm{dom}_{\Pi Y}}
     \\
		   \;
     & X
		   \ar{ul}[description]{Lf}
		   \ar{dl}[description]{\id_X}
	     \ar{ur}[description]{\mathrm{id}_{\Pi Y}f}
		   \ar{dr}[description]{f}
     \\
		   X
			 \ar{rr}[below]{f}
     &
     & Y
	  \end{tikzcd}
	  \quad
		\begin{tikzcd}
			X
			\ar[ddr,phantom,near start,"\eta_f"]
			  \ar{dd}[left]{f}\ar{r}[above]{Lf} & X\times_f\Pi Y\ar{dd}[right]{\mathrm{dom}^*_{\Pi Y}f}\ar{ddl}[description]{\Gamma f}
			\\
			& \;
			\\
			Y & \Pi Y\ar{l}[below]{\mathrm{cod}_{\Pi Y}}
		\end{tikzcd}
  \end{equation*}
	Thus $\Gamma$ will be regarded as an endofunctor on $(\XSTREAMS^{\indexcat{1}})^{[1]}$ pointed by the unit $\eta$ whose components are defined by the commutative triangle above.

\begin{prop}
  \label{prop:monad}
  Commutative diagrams of the following form
  \begin{equation*}
      \begin{tikzcd}
			  	X\times_f\Pi Y\times_{\mathrm{cod}_{\Pi Y},\mathrm{dom}_{\Pi Y}}\Pi_Y
	          \ar[r]
						\ar{d}[left]{\Gamma^2f}
					&  X\times_f\Pi Y\ar{d}[right]{\Gamma f}
        \\
				  Y
					\ar[r,equals]
				& Y
			\end{tikzcd}
		\end{equation*}
		where the top horizontal stream map is naturally induced by $\circ_{\Pi Y}:\Pi Y\times_{\mathrm{cod}_{\Pi Y},\mathrm{dom}_{\Pi Y}}\Pi_Y$, define a multiplication $\mu:\Gamma^2\ra\Gamma$ turning $\Gamma$ into a monad.
\end{prop}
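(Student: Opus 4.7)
The plan is to reduce the monad axioms for $\Gamma$ to the internal category axioms of $(X,\Pi X,\mathrm{dom}_{\Pi X},\mathrm{cod}_{\Pi X},\mathrm{id}_{\Pi X},\circ_{\Pi X})$ on $\XSTREAMS^{\indexcat{1}}$ supplied by Proposition~\ref{prop:composition}. First, for each $\indexcat{1}$-stream map $f:X\ra Y$, I would construct the top horizontal $\indexcat{1}$-stream map in the displayed square by invoking the universal property of the pullback $X\times_f\Pi Y$: project onto $X$ on the first coordinate, and compose the two Moore paths with $\circ_{\Pi Y}$ on the second coordinate. These two components agree along $f$ and $\mathrm{dom}_{\Pi Y}$ because the second Moore path in a composable pair starts where the first one ends --- which is $f(x)$ by the definition of $X\times_f\Pi Y$ as a pullback along $\mathrm{dom}_{\Pi Y}$. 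The square then commutes because the codomain of a composite Moore path equals the codomain of its second factor, so this data defines a morphism $\mu_f:\Gamma^2 f\ra\Gamma f$ in $(\XSTREAMS^{\indexcat{1}})^{[1]}$.

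Next, I would verify that the assignment $f\mapsto\mu_f$ is natural in $f\in(\XSTREAMS^{\indexcat{1}})^{[1]}$. Because $\Pi$ and $\circ_{\Pi(-)}$ are natural in the underlying $\indexcat{1}$-stream, both sides of the naturality square for $\mu$ are determined by the same universal properties of iterated pullbacks composed with $\circ_{\Pi(-)}$, so they agree.

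Then I would verify the monad axioms proper. Both unit axioms $\mu\circ\eta\Gamma=\mathrm{id}_{\Gamma}$ and $\mu\circ\Gamma\eta=\mathrm{id}_{\Gamma}$ reduce, under the universal property of pullbacks, to the two unit laws for $\circ_{\Pi Y}$ against $\mathrm{id}_{\Pi Y}$ in the internal Moore path category of $Y$. Associativity $\mu\circ\Gamma\mu=\mu\circ\mu\Gamma$ reduces similarly to associativity of $\circ_{\Pi Y}$. All of these equalities are exactly what Proposition~\ref{prop:composition} provides.

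The main bookkeeping step --- and the most likely place to slip --- is identifying the iterated pullback $(X\times_f\Pi Y)\times_{\Gamma f,\mathrm{dom}_{\Pi Y}}\Pi Y$ with $X\times_f(\Pi Y\times_{\mathrm{cod}_{\Pi Y},\mathrm{dom}_{\Pi Y}}\Pi Y)$ compatibly with their structural maps to $Y$, so that $\circ_{\Pi Y}$ from Proposition~\ref{prop:composition} can be applied as a natural composition on Moore paths based at $f(x)$. This is a routine application of the pasting lemma for pullbacks, but it must be carried out carefully in order to invoke the internal-category axioms cleanly. Beyond this, there is no serious obstacle: all the geometric content --- that composition of Moore paths defines a stream map, and that the resulting internal category satisfies its structural axioms --- has already been established in Proposition~\ref{prop:composition}.
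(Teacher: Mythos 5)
Your argument is correct, but it takes a genuinely different route from the paper. You derive the monad laws for $\Gamma$ internally to $\XSTREAMS^{\indexcat{1}}$: after identifying the iterated pullback $(X\times_f\Pi Y)\times_{\Gamma f,\mathrm{dom}_{\Pi Y}}\Pi Y$ with $X\times_f(\Pi Y\times_{\mathrm{cod}_{\Pi Y},\mathrm{dom}_{\Pi Y}}\Pi Y)$ by pasting, the unit and associativity axioms for $\mu$ reduce componentwise to the unit and associativity axioms of the internal Moore path category supplied by Proposition \ref{prop:composition} --- and your sketch of that reduction (including the slightly delicate computation of $\Gamma\eta_f$ as the functorial action of $\Gamma$ on the morphism $(Lf,\id_Y)$ of the arrow category) is sound. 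The paper argues differently: it observes that the forgetful functor $T:\XSTREAMS^{[1]}\ra\XSPACES^{[1]}$ is faithful, that the underlying continuous maps of $\mu$ constitute the multiplication of the \emph{classical} Moore path monad on $\XSPACES^{[1]}$ (citing the literature for the fact that this is a monad), and hence that the monad equations, holding after applying the faithful $T$, already hold in $\XSTREAMS^{[1]}$; the case of general $\indexcat{1}$ then follows by naturality. The trade-off is that the paper's transfer argument outsources all pullback bookkeeping to the classical reference and only needs Proposition \ref{prop:composition} to know that $\circ_{\Pi Y}$ is a stream map, whereas your approach is self-contained (it never leaves the category of streams and never cites the classical monad) at the cost of carrying out the iterated-pullback identifications and the verification of naturality explicitly. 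Both are legitimate proofs of the statement.
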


The proof relies on the fact that the restriction and corestriction of $\Gamma$ to a pointed endofunctor on $\XSTREAMS^{[1]}$ underlies a monad with monad multiplication defined as the restriction and corestriction of the multiplication in the statement of the proposition \cite{barthel2013construction}.

\begin{proof}
	The forgetful functor $T:\XSTREAMS^{[1]}\ra\XSPACES^{[1]}$ is faithful and the composite $TF$ of $T$ with its left adjoint $F$ is the identity $\id_{\XSPACES^{[1]}}$.  
	In the case  $\indexcat{1}=\star$, $T\mu_F$ is a multiplication turning the pointed endofunctor $T\Gamma F$ into a monad on $\XSPACES^{[1]}$ and therefore $\mu$ turns $\Gamma$ into a monad.
	The case for general $\indexcat{1}$ follows by naturality.  
\end{proof}

More generally, $\Gamma$ will denote the induced monad on $(\XSTREAMS^{\indexcat{1}})^{[1]}$.   

\begin{thm}
  \label{thm:fibrations}
	The following are equivalent for a $\indexcat{1}$-stream map $f$.
  \begin{enumerate}
    \item $f$ is an h-fibration
    \item $f$ underlies an algebra over the pointed endofunctor $\Gamma$
	\end{enumerate}
\end{thm}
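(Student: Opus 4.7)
The plan is to prove both implications by translating between Moore-path algebra structure and lifts explicitly. Unpacking, an algebra over the pointed endofunctor $\Gamma$ on $f$ amounts to a $\indexcat{1}$-stream map $s\colon X\times_f\Pi Y\to X$ satisfying the commutativity $f\circ s=\Gamma f$ together with the unit axiom $s\circ Lf=\id_X$.

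For the implication (2)$\Rightarrow$(1), given a lifting problem with top map $u\colon A\to X$ and bottom map $h\colon A\times\I\to Y$ satisfying $h\circ(A\times(\{0\}\ira\I))=f\circ u$, I would construct a $\indexcat{1}$-stream map $\Phi\colon A\times\I\to X\times_f\Pi Y$ sending $(a,t)$ to the triple consisting of $u(a)$, the Moore path $r\mapsto h(a,\min(r,t))$ of length $t$ in $Y$, and $t$ itself. The composite $s\circ\Phi\colon A\times\I\to X$ is then the desired lift: its restriction to $A\times\{0\}$ equals $s\circ Lf\circ u=u$ by the unit axiom, and its post-composition with $f$ equals $\mathrm{cod}_{\Pi Y}\circ\mathrm{dom}_{\Pi Y}^*f\circ\Phi=h$ by the commutativity axiom. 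Verifying that $\Phi$ is a genuine $\indexcat{1}$-stream map reduces, through the exponential structure [\THMXClosed] and the product-circulation description [Lemma \ref{lem:topological.tensors}], to the observation that $\min\colon\I\times\R_+\to\I$ is a stream map into an initially circulated target.

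For the converse (1)$\Rightarrow$(2), I would first upgrade the h-fibration property by showing that $f$ has the RLP against $A\times(\{0\}\ira\R_+)$ for every $\indexcat{1}$-stream $A$; this follows by inductively solving $\I$-shaped lifting problems on the successive intervals $[n,n+1]$ after reparametrization and gluing along the colimit description $A\times\R_+=\colim_n A\times[0,n]$ in $\XSTREAMS^{\indexcat{1}}$, which is valid because $\R_+$ carries the initial circulation. Applying the upgraded property to $A=X\times_f\Pi Y$, top map $u=f^*\mathrm{dom}_{\Pi Y}$, and bottom map $h\colon A\times\R_+\to Y$ sending $((x,\gamma,t),r)$ to $\gamma(r)$, which satisfies the initial condition $h(-,0)=fu$ by the pullback defining $A$, yields a lift $\tilde h$. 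I would define $s\colon A\to X$ as the composite of $\tilde h$ with the stream map $A\to A\times\R_+$ pairing $\id_A$ with the length projection $A\to\R_+$; commutativity $f\circ s=\Gamma f$ then reduces to $f\tilde h=h$ evaluated at the length coordinate, and the unit axiom $s\circ Lf=\id_X$ holds automatically because $Lf(x)=(x,\mathrm{const}_{f(x)},0)$ has length coordinate zero, so $s(Lf(x))=\tilde h(Lf(x),0)=u(Lf(x))=x$ by the initial-condition half of the lift.

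The main obstacle is not the logical structure but ensuring that each constructed map preserves circulation structure rather than merely underlying topology. Working with $\R_+$-lifting in the converse direction is the crucial technical move: it sidesteps the classical unit-axiom headache, which in the $\I$-based approach would require a Strom-type reparametrization trick, by placing the degenerate subspace $Lf(X)$ precisely at the initial time coordinate where the lift is already pinned by the lifting condition.
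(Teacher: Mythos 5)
Your argument is correct, and for (1)$\Rightarrow$(2) it is essentially the paper's: the paper likewise first upgrades the RLP from $\I$-cylinders to $\R_+$-cylinders (packaged as exhibiting $\{0\}\ira\R_+$ as a retract of the transfinite composite $\{0\}\ira\I^\omega$ of maps $\I^n\times(\{0\}\ira\I)$, using cocontinuity of $(X\times_f\Pi Y)\times-$; your induction-and-gluing over $[n,n+1]$ is the same argument unwound), then solves the tautological lifting problem on $X\times_f\Pi Y$ with the evaluation map as bottom arrow and composes the lift with the length coordinate. Where you genuinely diverge is (2)$\Rightarrow$(1): the paper does not solve the lifting problem directly, but instead constructs a morphism of pointed endofunctors $N\to\Gamma$, where $N$ is Cole's mapping-cocylinder functor built from $Y^{\I}$, by sending a path to the Moore path of length $1$; a $\Gamma$-algebra is then an $N$-algebra, and the conclusion is cited from \cite[Proposition 2.5]{cole2006many}. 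Your direct construction via $\Phi(a,t)=(u(a),\,r\mapsto h(a,\min(r,t)),\,t)$ is what sits inside that citation, so your route is more self-contained and makes explicit exactly which stream-map verification carries the content, while the paper's route reuses the formal machinery it needs anyway elsewhere (the same Cole proposition reappears in the cofibration section) and keeps the naturality in $f$ visible. Two small calibration points: the fact you actually need for $\Phi$ is that a continuous map between streams with initial circulations is automatically a stream map (via the Cartesian monoidal forgetful functor to preordered sets), which is not quite what Lemma \ref{lem:topological.tensors} says; and the colimit $A\times\R_+=\colim_nA\times[0,n]$ is justified by $\R_+=\colim_n[0,n]$ in $\XSTREAMS$ together with cocontinuity of $A\times-$ from Cartesian closedness, rather than by initiality of the circulation on $\R_+$ per se. Neither affects the correctness of the argument.
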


The proof of (2)$\implies$(1) uses a formal characterization of h-fibrations in a bicomplete $\XSPACES$-enriched category as algebras over a certain pointed endofunctor $N$ \cite[Proposition 2.5]{cole2006many}, constructed in the proof for the particular setting of $\indexcat{1}$-streams, on the associated arrow category.  

\begin{proof}
	Take $f:X\ra Y$.  
	\vspace{.1in}
  \textit{(1)$\implies$(2)}:
  Assume (1).  Consider the solid commutative diagram
	\begin{equation*}
		\begin{tikzcd}
			\{0\}
			  \ar[rr,hookrightarrow] 
					\ar[d,equals]
		    	&&
		     {[0,1]}
				  \ar[rr,hookrightarrow] 
					\ar[d,equals]
			&
			& {[0,2]}
			    \ar[rr,hookrightarrow] 
					\ar{d}[description]{i_1}
			&
			& {[0,3]}
			    \ar[rr,hookrightarrow] 
					\ar{d}[description]{i_2}
			&
			& \cdots
			&
			& \R_{+}
			  \ar[dotted]{d}[right]{i}
      \\
      \{0\}
			  \ar[rr,hookrightarrow] 
					\ar[d,equals]
			&&
			  \I
				  \ar[d,equals]
				  \ar{rr}[description]{\I\times(\{0\}\ira\I)} 
			&
			& \I^2
			    \ar{rr}[description]{\I^2\times(\{0\}\ira\I)} 
				  \ar{d}[description]{v\mapsto v_1+v_2}
			&
			& \I^3
			    \ar{rr}[description]{\I^3\times(\{0\}\ira\I)} 
				  \ar{d}[description]{v\mapsto v_1+v_2}
			&
			& \cdots
			&
			& \I^\omega
			  \ar[dotted]{d}[right]{r}
			\\
			\{0\}
			  \ar[rr,hookrightarrow] 
			  &&
		    {[0,1]}
				  \ar[rr,hookrightarrow] 
			&
			& {[0,2]}
			    \ar[rr,hookrightarrow] 
			&
			& {[0,3]}
			    \ar[rr,hookrightarrow] 
			&
			& \cdots
			&
			& \R_{+}
	  \end{tikzcd}
	\end{equation*}
	of spaces and continuous maps defined by $i_n(x)=(1,1,\ldots,1,x-n+1)$ for $n-1\leqslant x\leqslant n$.  
	In each column, the bottom solid vertical arrow is a retraction to the top solid vertical arrow. 
	Thus the vertical arrows induced dotted continuous maps $i,r$ between the transfinite composites of the rows with $r$ a retraction to $i$.
	Let $F$ be the endofunctor 
	$$F=(X \times_f\Pi Y)\times-:\XSTREAMS^{\indexcat{1}}\ra\XSTREAMS^{\indexcat{1}},$$
	cocontinuous by $\XSTREAMS^{\indexcat{1}}$ Cartesian closed and therefore preserving transfinite composites.
	Then $f$ has the RLP against $F(\{0\} \hookrightarrow \I^\omega)$ because it has the RLP against the image of each of the middle horizontal arrows under $F$.
	Thus $f$ has the RLP against $F(\{0\} \hookrightarrow \R_{+})$ by $Fr$ a retraction to $Fi$.  
	Then there exists a dotted $\indexcat{1}$-stream map $\ell$ making the diagram
	\begin{equation*}
	\begin{tikzcd}[column sep=huge]
		  X 
			  \ar{rrrr}[above]{\id_X}
			  \ar{drr}[description]{Lf} 
			  \ar{ddd}[left]{Lf} 
	  &  
		&
		&
		& X \ar{ddd}[right]{f}
	\\
	  &
	  & X\times_f\Pi Y
		  \ar{urr}[description]{\pi_X}
	  	\ar{d}[left]{F(\{0\}\ira\R_{+})}
	\\
	  &
    & X\times_f\Pi Y\times\R_+
			 \ar[dotted]{uurr}[description]{\ell}
			 \ar{drr}[description]{\adjoint(\id_{Y^{\R_{+}}})(\pi_{Y^{\R_+}}\times\R_+)}
	\\
      X\times_f\Pi Y
			\ar{rrrr}[below]{\Gamma f}
			\ar{urr}[description]{(\id_{X\times_f\Pi Y})\times\pi_{\R_+}}
	&
	&
	&
	& Y,
  \end{tikzcd}
  \end{equation*}
	commute.
  The composite $\ell\circ(\id \times\pi_{\R_{+}})$ gives the desired algebra structure for $f$. 

	\vspace{.1in}
  \textit{(2)$\implies$(1)}:
	Define $Nf,\eta_f$, natural in $f$, by the commutative diagram
  \begin{equation}
	  \label{eqn:pushout-products}
		\begin{tikzcd}
			  X\times_fY^{\I}\ar[rr]\ar{dd}[left]{Nf}
      &
			& Y^{\I}\ar{dd}[right]{Y^{\{1\}\ira\I}}
      \\
			& X
			  \ar{ul}[description]{\eta_f}
				\ar{dl}[description]{\id_X}
				\ar{ur}[description]{Y^{(\{1\}\ira\I)}f}
				\ar{dr}[description]{f}
      \\
		  	X\ar{rr}[below]{f}
      &
			& Y,
		\end{tikzcd}
  \end{equation}
	whose outer square is a pullback diagram.
	Thus $N$ defines a pointed endofunctor 
	$$N:(\XSTREAMS^{\indexcat{1}})^{[1]}\ra(\XSTREAMS^{\indexcat{1}})^{[1]}.$$
	
	Consider the solid $\indexcat{1}$-streams in the diagram
  \begin{equation*}
	  \begin{tikzcd}
			  Y^{\I}
				\ar[dotted]{d}[left]{i_f}
				\ar[r,equals]
			& Y^{\I}
		  	\ar{d}[right]{Y^{\min(-,1):\R_{+}\ra\I}\times(\{1\}\ira\R_{+})}
			\\
			  \Pi Y
			\ar[r,hookrightarrow]
			& Y^{\R_{+}}\times\R_{+}
 	  \end{tikzcd}
  \end{equation*}
	There exists a dotted $\indexcat{1}$-function, unique by the right vertical arrow monic, hence a $\indexcat{1}$-stream map $i_f$ by the bottom horizontal arrow an inclusion of substreams and natural in $f$ by unicity, making the entire diagram commute.
	Applying the pullback functor $f^*$ to $i_f$ defines the $f$-component of a map $N\ra\Gamma$ of pointed endofunctors.
	If (2), then $f$ is an $N$-algebra and therefore (1) \cite[Proposition 2.5]{cole2006many}.
\end{proof}

\begin{eg}
	\label{eg:di-les}
  For each pair $(Y,X)$ of streams and $x\in X$, relative variants 
	\begin{align*}
	     \pi_{p+1,q}^{-}(Y,X,x)
	  &= \pi_{p,q}\Tau(Y,X,x)\\
		&= \pi_{0,q}\Tau(\Omega^{(p,0)}_\star Y,\Omega^{(p,0)}_\star X,\{\sphere^{(p,0)}\ra\{x\}\})
	\end{align*}
	where $\Tau(Y,X,x)$ denotes the subcategory of $\Tau Y$ consisting of all Moore dipaths [Example \ref{eg:moore.dipaths}] starting at $x$ and ending at a point in $X$, fit inside sequences of pointed sets below in which functions strictly to the left of $\pi^-_{p,1}(Y,X,x)$ are monoid homomorphisms:
  $$\cdots\ra\pi_{1,q}(X,x)\ra\pi_{1,q}(Y,x)\ra\pi^{-}_{1,q}(Y,X,x)\ra\pi_{0,q}(X,x)\ra\pi_{0,q}(Y,x)$$
  These sequences are generally not even exact but are at least chain complexes (in the sense that composites of successive functions are constant) due to an observation elsewhere \cite[\S 3.5]{grandis2002directed}.
\end{eg}

\section{Cofibrations}\label{sec:cofibrations}
Call a $\indexcat{1}$-stream map $f$ an \textit{h-cofibration} if $f$ has the LLP against $E^{\{0\}\ira\I}$ for each $\indexcat{1}$-stream $E$.

\begin{thm}
  \label{thm:NDR}
	The following are equivalent for a $\indexcat{1}$-stream map $e:X\ra Y$.
  \begin{enumerate}
    \item $e$ has the LLP against all h-acyclic h-fibrations
    \item $e$ is an h-cofibration
    \item There exist dotted arrow $u$ making the square
    \begin{equation}
      \label{eqn:urysohn}
      \begin{tikzcd}
        X\ar{r}[above]{e}\ar[d] & Y\ar[dotted]{d}[right]{u}\\
	\{0\}\ar[r,hookrightarrow] & \I
      \end{tikzcd}
    \end{equation}
		a pullback square and a stream map $f\sim\id_Y$ relative $e$ such that for each $\indexcat{1}$-object $o$, $f_o(y)\in X(o)$ whenever $u_o(y)<1$
  \end{enumerate}
\end{thm}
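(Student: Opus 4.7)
The plan is to prove the cycle $(3) \Rightarrow (1) \Rightarrow (2) \Rightarrow (3)$, adapting Str\o{}m's classical NDR characterization of closed Hurewicz cofibrations, using Cartesian closure of $\XSTREAMS^{\indexcat{1}}$ and the Moore cocylinder monad $\Gamma$ from Section \ref{sec:fibrations}. For $(1) \Rightarrow (2)$, it suffices to verify that $E^{\{0\}\ira\I}$ is itself an h-acyclic h-fibration for every $\indexcat{1}$-stream $E$: the h-fibration property reduces via the adjunction $- \times \I \dashv (-)^{\I}$ to the existence of a retract $\I \times \I \to (\{0\} \times \I) \cup (\I \times \{0\})$, which is continuous between streams carrying initial circulations, hence automatically a stream map; h-acyclicity holds because $\{0\} \ira \I$ is an h-equivalence via the linear retraction of $\I$ to $\{0\}$, and $E^{(-)}$ preserves h-equivalences.

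The main work is $(2) \Rightarrow (3)$. Rephrase the LLP against every $E^{\{0\}\ira\I}$, via the same adjunction, as the homotopy extension property (HEP): for every $\indexcat{1}$-stream $E$, every $g : Y \to E$, and every dihomotopy $X \times \I \to E$ extending $g e$, there is a dihomotopy $Y \times \I \to E$ extending both. Apply HEP with $E$ the mapping cylinder $M(e) = Y \cup_X (X \times \I)$, constructed as a pushout in the cocomplete $\XSTREAMS^{\indexcat{1}}$, with inputs given by the structural inclusions $Y \to M(e)$ and $X \times \I \to M(e)$. The resulting extension is a retract $r : Y \times \I \to M(e)$ of the canonical $i : M(e) \to Y \times \I$. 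Writing $i r = (\alpha, \beta) : Y \times \I \to Y \times \I$, where $\beta(y, t) = 0$ whenever $r(y, t)$ sits over the $Y$-summand, define
\begin{equation*}
u(y) = \sup_{t \in \I} \max(0, t - \beta(y, t)), \qquad f(y) = \alpha(y, 1), \qquad h(y, t) = \alpha(y, t).
\end{equation*}
The retract identities immediately give $h(-, 0) = \id_Y$, $h(e(-), -) = e$, $u(e(-)) = 0$, and the implication $u(y) < 1 \Rightarrow f(y) \in e(X)$.

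The main obstacle is the pullback condition $u^{-1}(0) = X$ as $\indexcat{1}$-substreams. That $u$ is a stream map is automatic since $\I$ carries the initial circulation and $u$ is continuous by compactness of $\I$. The inclusion $u^{-1}(0) \subseteq \overline{e(X)}$ follows directly: $u(y) = 0$ forces $\alpha(y, t) \in e(X)$ for all $t > 0$, so $y = \alpha(y, 0) \in \overline{e(X)}$ by continuity. Equality then requires $e$ to be a closed substream inclusion, which is not formal. I would extract closedness by combining weak Hausdorffness of $Y$ with a further application of HEP whose target separates $e(X)$ from its complement, paralleling the classical argument for cofibrations in compactly generated spaces; the set-theoretic equality then promotes to an isomorphism of substreams via the topological functor $\XSTREAMS^{\indexcat{1}} \to \XSPACES^{\indexcat{1}}$ (\PropTop).

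For $(3) \Rightarrow (1)$, given NDR data $(u, f, h)$ and an h-acyclic h-fibration $p : E_1 \to E_2$ with commutative square $(f_X : X \to E_1,\ g : Y \to E_2)$, use the $\Gamma$-algebra structure on $p$ from Theorem \ref{thm:fibrations} to lift Moore paths. For each $y \in Y$, lift the Moore path $t \mapsto g(h(y, 1 - t))$ in $E_2$ through $p$, starting from $f_X(f(y)) \in E_1$ over $\{u < 1\}$ (where $f(y) \in X$) and from a homotopy inverse of $p$ applied to $g(y)$ elsewhere; reparametrize the two pieces by $u$ so they glue into a single $\indexcat{1}$-stream map $Y \to E_1$. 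Naturality of $\Gamma$ and the composition formulas of Proposition \ref{prop:composition} ensure the result is a stream map providing the required lift.
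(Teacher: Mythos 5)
Your steps $(1)\Rightarrow(2)$ and $(2)\Rightarrow(3)$ track the paper's proof closely (the paper outsources the fact that $E^{\{0\}\ira\I}$ is an h-acyclic h-fibration to \cite[Lemma 4.2.4(i)]{may2006parametrized}, and its $(2)\Rightarrow(3)$ is exactly your mapping-cylinder-retraction construction of $u,f,h$). One small simplification for your closedness worry there: no further application of the HEP is needed, since $f$ restricted to the objectwise closed set $u^{-1}([0,\half])$ is a retraction onto $X$, and a retract of a weak Hausdorff k-space is closed; this is how the paper gets $u^{-1}(0)=X$.

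The genuine gap is in $(3)\Rightarrow(1)$, which your cycle makes load-bearing. As written, the construction fails on two counts. First, on the locus where $u\geqslant 1$ you start the lifted Moore path at $q(g(y))$ for a homotopy inverse $q$ of $p$; then $p$ of the resulting endpoint is only homotopic to $g(y)$, not equal to it, so the lifting condition $p\ell=g$ fails there, and the two branches of your definition do not agree on the overlap, so "reparametrize so they glue'' is not a construction. The correct classical argument (Str\o{}m, Dold, \cite[Lemma 4.2.4(ii), 4.2.5]{may2006parametrized}) takes a genuine section $s$ of $p$ with a \emph{vertical} homotopy $sp\sim\id$, sets $\ell_0=sg$, uses the homotopy \emph{extension} property of $e$ to deform $\ell_0$ into a map agreeing with $f_X$ on $X$, and then corrects back over $g$ by lifting the tracking homotopy, reparametrized by $u$ so the correction is constant on $X$. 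Second, and more structurally: that argument (and every variant I know) requires the HEP for $e$, i.e.\ statement (2), or equivalently a stream-map retraction of $Y\times\I$ onto $Me$. Deriving that from (3) alone is precisely the implication $(3)\Rightarrow(2)$, and it is the one place where non-formal stream theory enters: one must show that the pushout circulation on $Me$ coincides with the substream circulation inherited from $Y\times\I$, which the paper does via Lemmas \ref{lem:topological.tensors} and \ref{lem:colimit.global.preorder}. Your proposal never engages with this comparison, so even after repairing the lifting formula, the cycle $(3)\Rightarrow(1)\Rightarrow(2)\Rightarrow(3)$ still silently presupposes $(3)\Rightarrow(2)$. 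You should either prove $(3)\Rightarrow(2)$ explicitly (as the paper does) or exhibit a lift built solely from $u$, $f$, $h$, the section of $p$, and the $\Gamma$-algebra structure that never passes through the mapping cylinder; the latter is not supplied.
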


The proof of (1)$\implies$(2) follows from the formal fact that $o^{\{0\}\ira\I}$ is an h-fibration for all objects $o$ in a bicomplete $\XSPACES$-enriched model category $\modelcat{1}$ \cite[Lemma 4.2.4 (i)]{may2006parametrized}.
The proof of (2)$\iff$(3) uses the established case where $f$ is a continuous map of spaces \cite[Theorem 2]{strom1967note}, the characterization of h-cofibrations between spaces as neighborhood deformation retracts, and mimics a classical argument \cite[proof of Theorem 4]{strom1967note}.
The proof of (3)$\implies$(1) mimics another classical argument \cite[proof of Theorem 3]{strom1967note}, but also uses the initiality of the circulation on $\I$.
The proof of (3)$\implies$(2) uses the facts that an h-cofibration of spaces is a topological embedding \cite[Theorem 1]{strom1967note} and that the inclusion of a mapping cylinder $M(e:X\ra Y)$ into $Y\times\I$ admits a strong deformation retraction if $e$ is an h-cofibration of spaces \cite[Lemma before Theorem 3]{strom1967note}.

\begin{proof}
	Let $o$ denote a $\indexcat{1}$-object.
  Let $T$ denote the forgetful functor
  $$T:\XSTREAMS\ra\XSPACES.$$

  Define $Me$ and $j$ by the commutative diagram whose outer square is a pushout square.
  \begin{equation}
	  \label{eqn:pushout-products}
		\begin{tikzcd}
		    X\times\I\ar[rr]\ar{dr}[description]{e\times\I}
      &
			& Me\ar{dl}[description]{j}
      \\
		  & Y\times\I
      \\
		  	X\ar[ur]\ar{uu}[left]{X\times(\{0\}\ira\I)}\ar{rr}[below]{e}
      &
			& Y\ar{uu}[right]{Y\times(\{1\}\ira\I)}\ar{ul}[description]{Y\times(\{1\}\ira\I)}
		\end{tikzcd}
  \end{equation}

  \vspace{.1in}
  \textit{(1)$\implies$(2)}:
	Assume (1).  
	Then $e$ has the LLP against $Z^{\{0\}\ira\I}$, an h-acyclic h-fibration \cite[Lemma 4.2.4 (i)]{may2006parametrized}, for each $\indexcat{1}$-stream $Z$.

  \vspace{.1in}
  \textit{(2)$\implies$(3)}:
  Assume (2).
  Then $j$ admits a retraction $r$ \cite[Proposition 2.5]{cole2006many}.
	Define $f,u,h:\id_Y\sim f$ relative $e$ by the commutative diagrams
  \begin{equation*}
    \xymatrix{
			**[l]Y\ar[r]^f\ar[d]_{\adjoint(r)}\ar[dr]|{\adjoint(h)} & **[r]Y=Y^{\{1\}}
      \\
			**[l](Me)^{\I}\ar[r]_{(\pi_Yj)^{\I}} & **[r]Y^{\I}\ar[u]_{Y^{\{1\}\ira\I}}
    }\quad
    \xymatrix{
      **[l]Y\ar[r]^u\ar[d]_{\adjoint(r)} & **[r]\I
      \\
			**[l](Me)^{\I}\ar[r]_{(\pi_{\I}j)^{\I}} & **[r]\I^{\I}\ar[u]_{\zeta\mapsto\sup\;\!\!_{t\in\I}|t-\zeta(t)|}
    }
  \end{equation*}

  For each $o$ and $y\in Y(o)$ with $u_o(y)<1$,
  \begin{align*}
		          |1-(\pi_{\I}j)_o(r_o(y,1))|<1
							&\implies (\pi_{\I}j)_o(r_o)(y,1)>0\\
    &\implies j_o(r_o(y,1))\in X(o)\times\I\\
    &\implies (\pi_Yj)_o(r_o(y,1))=f_o(y)\in X(o)
  \end{align*}
	Therefore $f$ restricts and corestricts to a retraction, from the objectwise closed substream $Y\times_u[0,\half]$ of $Y$, to $e$.
	Thus $e$ can be taken to be an inclusion of a substream with objectwise closed image.  

	Fix $o$.
  For each $y\in u_o^{-1}(0)$, $r_o(y,t)\in X(o)\times\I$ for all $t>0$ and hence also for $t=0$ by $X(o)\times\I$ closed in $Y(o)\times\I$.
  Thus $u_o^{-1}(0)\subset X(o)$.
  Conversely for each $x\in X(o)$, $r_o(x,t)=(x,t)$ for all $t\in\I$ and hence $u_o(x)=0$.
  Thus $X(o)\subset u_o^{-1}(0)$.

  Thus $e=u^{*}(\{0\}\ira\I)$ as functions and hence as stream maps by $X$ a substream of $Y$.
  Hence (3).

  \vspace{.1in}
  \textit{(3)$\implies$(2)}:
  Posit $u,f$ and $h:\id_X\sim f$ relative $e$ as in (3).

  Note $Te=T(u^*\{0\}\ira\I)=(Tu)^*(\{0\}\ira\I)$ and $Th:\id_{TY}\sim Tf$ relative $Te$ [\PropTop]. 
  The continuous map $Te$, having objectwise closed image by $\{0\}$ closed in $\I$, is an objectwise Hurewicz cofibration \cite[Theorem 2]{strom1967note} and can therefore be taken to be a objectwise inclusion of spaces \cite[Theorem 1]{strom1967note}.
  Then $j$ admits a retraction \cite[Lemma before Theorem 3]{strom1967note} and hence can also be taken to be an objectwise inclusion of spaces.

  Fix $o$.
  Let $U$ denote an open subset of $Y(o)$.
  Let $V$ denote an open subset of $\I$; for each $V$, let $V_0=V\cap\{0\}$.
  The topology of $(Me)(o)$ has as a basis sets of the form $(Me)(o)\cap(U\times V)$ by $(Me)(o)$ a subspace of $Y(o)\times\I$.
  Consider sets of the form
  \begin{align}
	   &\;\Gamma_{Y(o)\times\I}((Me)(o)\cap(U\times V))\label{eqn:ndr:substream}\\
		 &\;\Gamma_{Y(o)\times V_0}((Y(o)\cap U)\times V_0)\amalg\Gamma_{X(o)\times\I}((X(o)\cap U)\times V)\label{eqn:ndr:coprod}\\
		 &\;\Gamma_{(Me)(o)}((Me)(o)\cap(U\times V))\label{eqn:ndr:mapping},
  \end{align}
  where $\Gamma_Z(W)$ denotes the graph of the preorder that the circulation on the substream $W$ of a stream $Z$ assigns to $W$ itself.
	Firstly, (\ref{eqn:ndr:substream})=(\ref{eqn:ndr:coprod}) [Lemma \ref{lem:topological.tensors}].
	Secondly, (\ref{eqn:ndr:coprod})=(\ref{eqn:ndr:mapping}) because: the circulation on $(Me)(o)$ makes (\ref{eqn:ndr:mapping}) the smallest graph of a preorder containing (\ref{eqn:ndr:coprod}) [Lemma \ref{lem:colimit.global.preorder}
] and (\ref{eqn:ndr:coprod}) defines a circulation on $(Me)(o)$.
  Thus (\ref{eqn:ndr:substream})=(\ref{eqn:ndr:mapping})

  Thus $Me$, whose objectwise circulations agree with the objectwise circulations of a substream of $Y\times\I$ on bases and hence everywhere, is a substream of $Y\times\I$.
  Hence $h$ corestricts to a retraction to $j$.
  Hence (2) \cite[Proposition 2.5]{cole2006many}.

  \vspace{.1in}
	\textit{(2)$\implies$(1)}:
	Assume (2).  
  Then $j$ is an h-acyclic h-cofibration \cite[Proposition 2.10]{cole2006many}.
  For clarity, let $X'=Me$ and $Y'=Y\times\I$.
  There exist retraction $r$ to $j$ and $h:\id_{Y'}\sim jr$ relative $j$ \cite[Lemma 4.2.5 (i)]{may2006parametrized}.
	There exists $u:Y'\ra\I$ with $(\{0\}\ira\I)^*u=j$ by (2)$\implies$(3).  
  It suffices to show $j$ has the LLP against all h-fibrations \cite[Lemma 4.2.4 (ii)]{may2006parametrized}.
  Let $s$ be the continuous map $\I^2\ra\I$ defined by
  $$s(\varepsilon_1,\varepsilon_2)=\min(1,\varepsilon_1^{-1}\varepsilon_2).$$

  Consider the left of the commutative diagrams
  \begin{equation*}
    \xymatrix{
      **[l] X'\ar[r]^{f'}\ar[d]_{j} & **[r] Z'\ar[d]^g\\
      **[l] Y'\ar[r]_{f''}\ar@{.>}[ur]|{f'''} & **[r]Z''
    }\quad
    \xymatrix@C=3pc{
			**[l]Y'\ar[rrrr]^{f'r}\ar[d]|{Y\times(\{0\}\ira\I)} & & & & **[r] Z'\ar[d]^g\\
      **[l]Y'\times\I
			\ar[r]_-{((\id_{Y'}\times u)\times\I)}
        \ar@{.>}[urrrr]|{j'}
      & Y'\times\I^2
			\ar[r]_{Y'\times s}
      & Y'\times\I
        \ar[r]_{h}
      & Y'
        \ar[r]_{f''}
      & **[r]Z''
    }
  \end{equation*}
  of solid arrows with $g$ an h-fibration.
  There exists a dotted arrow $j'$ making the right diagram commute by $g$ an h-fibration.
  Then $f'''=j'(\id_{Y'}\times u)$ makes the left diagram commute.
\end{proof}

The pullback criterion for h-cofibrations highlights a practical difference between classical homotopy and dihomotopy.
Take $\indexcat{1}=\star$.
In the case where $i$ is an inclusion of spaces in nature (the circulations on $X,Y$ both initial, the topology of $Y$ normal), Urysohn's Lemma implies the existence of a pullback square (\ref{eqn:urysohn}).
In the case where $i$ describes an inclusion of state streams in nature (there exist $x\in X$ and $y\in Y-X$ with $x\leqslant_Yy$ or $y\leqslant_Yx$), the triviality of $\leqslant_{\I}$ precludes the existence of a pullback square (\ref{eqn:urysohn}).

\begin{eg}
	\label{eg:hep.failure}
  An inclusion of streams of the form
	$$\star\ira\sphere^{(p,q)}$$
	is not an h-cofibration if $p>0$ because $\infty\leqslant_{\sphere^1}x\leqslant_{\sphere^1}\infty$ for all $x\in\sphere^1$.  
	Thus the isomorphism type of $\pi_{p,q}(X,x)$ for $p>0$ is generally dependent on the choice of basepoint $x$, even on a path-connected stream $X$.  
\end{eg}

\section{Model structure}\label{sec:h}
The classical Hurewicz model structure on $\XSPACES$ thus extends to the following model structure on $\indexcat{1}$-streams.  

\begin{thm}
  \label{thm:h-streams}
	There exists a model structure on $\XSTREAMS^{\indexcat{1}}$ in which \ldots
  \begin{enumerate}
		\item\label{item:h-equivalences} weak equivalences are the h-equivalences
    \item\label{item:h-fibrations} fibrations are the h-fibrations
    \item\label{item:h-cofibrations} cofibrations are the h-cofibrations
  \end{enumerate}
  This model structure is both proper and Cartesian monoidal.
	The forgetful functor $\XSTREAMS^{\indexcat{1}}\ra\XSPACES^{\indexcat{1}}$ is both a left and right Quillen maps from this model structure to an h-model structure on $\indexcat{1}$-spaces.
\end{thm}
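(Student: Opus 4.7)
The plan is to verify the model category axioms on $\XSTREAMS^{\indexcat{1}}$ by leveraging Theorem \ref{thm:fibrations} and Theorem \ref{thm:NDR}, with the Moore path construction preceding Proposition \ref{prop:monad} supplying one functorial factorization and a mapping cylinder supplying the other. The 2-out-of-3 property for h-equivalences is routine in the quotient category by dihomotopy, and h-(co)fibrations will be retract-closed by their lifting characterizations: $\Gamma$-algebra structures on the fibration side and the NDR data of Theorem \ref{thm:NDR}(3) on the cofibration side, both retract-stable.

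For the (h-acyclic h-cof, h-fib) factorization, I would take $f = \Gamma f \circ L f$. Proposition \ref{prop:monad} gives $\Gamma f$ the structure of a $\Gamma$-algebra, hence an h-fibration by Theorem \ref{thm:fibrations}. The map $L f$ is split by $f^{*}\mathrm{dom}_{\Pi Y}$, and a dihomotopy rel $X$ between $L f \circ f^{*}\mathrm{dom}_{\Pi Y}$ and $\id$ is furnished by the reparameterization contracting each Moore path $(\zeta, t)$ to the constant path at $\zeta(0)$; the LLP of $L f$ against every $\Gamma$-algebra, hence every h-fibration, will follow formally from the algebraic weak factorization system generated by $\Gamma$, making $L f$ an h-acyclic h-cofibration. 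For the (h-cof, h-acyclic h-fib) factorization, I would form the mapping cylinder $M f = Y \cup_f (X \times \I)$ and factor $f$ as $X \ra M f \ra Y$. The inclusion will be an h-cofibration by supplying Theorem \ref{thm:NDR}(3)'s data from the cylinder coordinate $u : M f \ra \I$ (extended by $0$ on $Y$); the projection $r : M f \ra Y$ is an h-equivalence via the section $Y \ira M f$, and will inherit a $\Gamma$-algebra structure by lifting Moore paths of $Y$ through the cylinder coordinate to Moore paths in $M f$.

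The (h-cof, h-acyclic h-fib) lifting axiom is exactly Theorem \ref{thm:NDR}(1)$\iff$(2). For the (h-acyclic h-cof, h-fib) lifting, I would factor any h-acyclic h-cof $i : A \ra B$ as $i = r_i \circ L i$; then $r_i$ is an h-fibration and, by 2-out-of-3, also an h-equivalence, so the first lifting axiom yields a section $s$ to $r_i$ with $s \circ i = L i$, exhibiting $i$ as a retract of $L i$ in the arrow category and transferring the LLP of $L i$ against h-fibrations to $i$. Every object will be fibrant (the map to $\star$ admits constant path-lifting) and cofibrant (Theorem \ref{thm:NDR}(3) with $u \equiv 1$), yielding both left and right properness. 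The Cartesian pushout-product axiom will follow by combining Theorem \ref{thm:NDR}(3) with Lemma \ref{lem:topological.tensors}. Finally, the forgetful functor $\XSTREAMS^{\indexcat{1}} \ra \XSPACES^{\indexcat{1}}$ is both left and right adjoint by the topological property of $\XSTREAMS$ over $\XSPACES$, preserves dihomotopies (hence h-equivalences), sends stream $\Gamma$-algebras to classical Moore-path algebras (hence h-fibrations to h-fibrations), and preserves the NDR data of Theorem \ref{thm:NDR}(3) (hence h-cofibrations to h-cofibrations); it will therefore be both left and right Quillen with respect to the h-model structure on $\indexcat{1}$-spaces obtained by the same argument applied to initial circulations.

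The hardest step will be equipping the mapping cylinder projection $r : M f \ra Y$ with a $\Gamma$-algebra structure as a stream map rather than merely as a continuous map; this requires Lemma \ref{lem:colimit.global.preorder} to compute the circulation on the pushout $M f$ and then checking that the classical path-lifting operation for cylinder projections restricts to a stream map. The remaining axioms will be essentially formal once the two functorial factorizations are in place.
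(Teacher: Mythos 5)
Your overall architecture is close to the paper's: in both cases the only non-formal input is the Moore-path factorization $f=(\Gamma f)(Lf)$ together with the verification that $Lf$ is an h-acyclic h-cofibration (via the reparameterization dihomotopy retracting $X\times_f\Pi Y$ onto $X$ and the pullback criterion of Theorem \ref{thm:NDR}(3), with $u$ induced by $\min(1,-)\circ\pi_{\R_+}$), and your retract argument reducing the lifting of h-acyclic h-cofibrations against h-fibrations to Theorem \ref{thm:NDR} is the standard one. The paper does \emph{only} this step explicitly and delegates everything else --- the second factorization, properness, and the pushout-product axiom --- to the general theory of topologically bicomplete categories, citing \cite[Theorem 4.3.1]{may2006parametrized} and \cite[Theorem 2.7]{schwanzl2002strong}, whereas you propose to verify these by hand.

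The genuine gap is in your (h-cofibration, h-acyclic h-fibration) factorization. The mapping cylinder projection $r:Mf\ra Y$ is a deformation retraction but is \emph{not} an h-fibration, and no $\Gamma$-algebra structure on it exists even at the level of underlying spaces: take $f:\star\ra\I$ hitting $\half$, so that $Mf$ is $\I$ with a whisker attached at $\half$; a path in $\I$ starting at $\half$ and immediately leaving it admits no lift beginning at the tip of the whisker, since for every small positive time the lift is forced to equal the path in the base copy of $\I$, while at time $0$ it must sit at the whisker tip. So the step you flag as ``the hardest'' is impossible as stated, not merely delicate. The standard repair --- and what is implicit in the paper's citation of \cite[Theorem 4.3.1]{may2006parametrized} --- is to compose the two factorizations: write $f=r\circ j$ with $j:X\ira Mf$ an h-cofibration and $r$ an h-equivalence, then apply the Moore-path factorization to $r$ itself, obtaining $r=(\Gamma r)(Lr)$ with $Lr$ an h-acyclic h-cofibration and $\Gamma r$ an h-fibration that is an h-equivalence by 2-out-of-3; then $(Lr)\circ j$ is an h-cofibration and $\Gamma r$ is an h-acyclic h-fibration, which is the desired factorization. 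With that substitution, and granting your (correct but sketched) verifications of the remaining axioms, your argument goes through.
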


The proof hinges on the construction of an (h-acyclic h-cofibration, h-fibration) factorization system.  
The argument that the left side of this system satisfies the requisite lifting properties mimics a proof for the classical setting \cite[proof of Corollary 3.12]{barthel2013construction}.

\begin{proof}
	In the functorial factorization $f=(\Gamma f)(Lf)$, $\Gamma f$ is an algebra over the pointed endofunctor $\Gamma$ [Proposition \ref{prop:monad}] and hence an h-fibration [Theorem \ref{thm:fibrations}].  
	It suffices to show that $Lf$ is an h-acyclic h-cofibration.
	A proper model structure, Cartesian monoidal \cite[Theorem 2.7]{schwanzl2002strong}, satisfying (\ref{item:h-equivalences}), (\ref{item:h-fibrations}), and (\ref{item:h-cofibrations}) would then exist \cite[Theorem 4.3.1]{may2006parametrized}.
	The last sentence would follow from the forgetful functor $\XSTREAMS^{\indexcat{1}}\ra\XSPACES^{\indexcat{1}}$ a $\XSPACES$-enriched left and right adjoint.

  Therefore consider the diagram
  \begin{equation*}
    \begin{tikzcd}
        X\times_f\Pi Y\times\I
          \ar[rrrrrrr,dotted]
					\ar[d,hookrightarrow]
      &
      &
      &
      &
      &
      &
      & X\times_f\Pi Y
			    \ar[d,hookrightarrow]
      \\
        X\times Y^{\R_{+}}\times\R_{+}\times\I
				\ar{rrrrrrr}[below]{X\times(\adjoint(Y^{s\pi'}:Y^{\R_{+}\ra\R_{+}\times\R_{+}\times\I})\times s\pi_{\R_+\times\I})}
      &
      &
      &
      &
      &
      &
      & X\times Y^{\R_{+}}\times\R_{+}
    \end{tikzcd}
  \end{equation*}
	where $s:\R_{+}\times\I\ra\R_{+}$ is multiplication and $\pi':\R_{+}\times\R_{+}\times\I\ra\R_{+}\times\I$ is projection onto first and third factors.
	There exists a dotted $\indexcat{1}$-function, a $\indexcat{1}$-stream map and hence $(Lf)r\sim\id_{X\times_Y\Pi Y}$ relative $Lf$, by the right vertical arrow a $\indexcat{1}$-stream embedding, making the diagram commute.

	Let $r=f^*\mathrm{dom}_{\Pi Y}$.  
  Define the $\indexcat{1}$-stream map $u$ by the commutative rectangle
  \begin{equation*}
    \begin{tikzcd}
        X\times_f\Pi Y\ar{rr}[above]{u}\ar{d}[left]{\mathrm{dom}^*_{\Pi Y}f} 
			& 
			& \I\\
			  \Pi Y\ar{rr}[below]{\pi_{\R_{+}}} 
			& 
			& \R_{+}\ar{u}[right]{\min(1,-)}
    \end{tikzcd}
  \end{equation*}
	Then $u^*(\{0\}\ira\I)=Lf$ as $\indexcat{1}$-functions and hence as $\indexcat{1}$-streams maps because $Lf$, a section to $r$, is a $\indexcat{1}$-stream embedding.
  Thus $Lf$ is an h-cofibration [Theorem \ref{thm:NDR}], h-acyclic by $(Lf)r\sim\id_{X\times_Y\Pi Y}$ and $r(Lf)=\id_{X}$.
\end{proof}

This h-model structure facilitates the construction of dihomotopy invariants with desired formal properties.  
The following relative variants of $\pi_{p,q}$ illustrates this convenience.  

\begin{eg}
	\label{eg:les}
	The construction $\pi_{p,q}$ fits inside an exact sequence
	$$\cdots\ra\pi_{p,1}(X,x)\ra\pi_{p,1}(Y,x)\ra\pi_{p,1}(Y,X,x)\ra\pi_{p,0}(X,x)\ra\pi_{p,0}(Y,x)$$
of pointed sets, where the functions strictly to the left of $\pi_{p,1}(Y,X,x)$ are monoid homomorphisms, by letting $\pi_{p,q+1}(Y,X,x)$ be $\pi_{p,q}$ applied to the h-homotopy pullback of $X\ira Y\la\{x\}$ based at $x$, or equivalently $\pi_{p,0}$ applied to the h-homotopy pullback of $\Omega^{(0,q)}_\star(X,x)\ira\Omega^{(0,q)}_\star(Y,x)\ila\{\sphere^{(0,q)}\ra\{x\}\}$; concretely, $\pi_{p,q}(Y,X,x)=\pi_{p,q+1}\Pi(Y,X,x)$, where $\Pi(Y,X,x)$ is the subcategory of $\Pi Y$ consisting of all Moore paths that start at a point in $X$ and end at $x$.  
\end{eg}

\begin{eg}
	\label{eg:semihomological}
	The construction $B_\star$ [Example \ref{eg:based.classifying.streams}] induces a full and faithful embedding
	$$\XMONOIDS\ira h\XSTREAMS^{[1]}$$
	from the category $\XMONOIDS$ of monoids and monoid homomorphisms to the homotopy category $h\XSTREAMS^{[1]}$ because it admits a retraction $F\mapsto\pi_{1,0}(F(1)/\im\,F(0\ra 1))$ [Example \ref{eg:dihomotopy.monoids}] by an application of simplicial approximation for directed topology \cite[Theorem 8.14]{krishnan2015cubical}.  
	While groups are often studied in terms of classical homotopy invariants on their based classifying spaces (eg. reduced group cohomology), monoids can instead be studied in terms of more refined h-homotopy invariants on their based classifying streams.  
	In this manner, the h-homotopy theory of based streams provides a setting to do homological algebra for monoids (cf. \cite{connes2017homological,patchkoria2000homology,ionescu1998parity}).  
\end{eg}

\section{Verification}\label{sec:formal.verification}
This section suggests a general strategy for state space analysis.  
Translate temporal properties of interest in terms of dihomotopy invariants like $\pi_{*,*}$ [Example \ref{eg:dihomotopy.monoids}] that are bigraded in a temporal degree and a spatial degree.  
Use long exact sequences for such invariants [Example \ref{eg:les}] that vary in the spatial degree, constructed with minimal technical fuss by means of the h-model structure, to get spatio-temporal obstructions to those properties.
Then use some sort of geometric argument to transport those obstructions along degree comparison maps, like $\pi_{p+i,q}\ra\pi_{p,q+i}$, to purely temporal obstructions.  
Then translate these more interpretable obstructions into concrete properties of the system. 
We illustrate this strategy below for a particular class of complex systems.
For convenience, we suppress notation for the basepoint when denoting a based (pair of) stream(s).

\subsection{Concurrent systems}
Consider a concurrent execution of $n>1$ different sequential processes subject to various constraints on their simultaneous access to shared resources.  
For example, these sequential processes might represent cash machines accessing a shared bank account, parallel threads of a computer saving files in a shared hard drive, or different logical circuits in an asynchronous microprocessor.    
In order to protect those shared resources against hazardous, simultaneous access by different processes, each sequential process is often required to obtain some sort of lock for the resource before performing some operation on the resource.  
The outcome of a concurrent execution is generally sensitive to the relative speeds at which those different sequential processes acquire and release their locks.   
The state stream $X$ can be constructed as a substream of the directed hypercube $\I^{(n,0)}$ containing the extrema of $\I^{(n,0)}$.
Intuitively, $\I^{(n,0)}-X$ consists of all states that are forbidden due to locks.
The substream $X_k\subset X$, of all tuples $(t_1,\ldots,t_n)\in X$ for which at least $n-k$ of the coordinates take values in $0,1$, represents the states of a restricted program that only runs at most $k$ different processes at the same time.  
An execution can be identified with a \textit{dipath} on $X$, a stream map $\I^{(1,0)}\ra X$.
We assume each sequential process can run unimpeded if each of the processses has either not started or not completed; equivalently take $X_1=(\I^n)_1$.
We further assume our constraints on parallelism are simple in the sense that $\I^{(n,0)}-X$ is a union of open isothetic hyperrectangles.  

\subsection{Serializability}
One desired property of a concurrent program is \textit{serializability}, an equivalence in outputs between each possible execution and an execution that operates each sequential process one at a time in some order.  
Serializability admits an intuitive geometric interpretation.
Two executions yield identical discrete outputs if their corresponding dipaths are h-homotopic relative endpoints.  
The concurrent program is thus serializable if and only if each extrema-preserving dipath on $X$ is h-homotopic relative endpoints to a dipath with image in $X_1$.  
Even though $X_1\ira X$ is almost never an h-cofibration [Theorem \ref{thm:NDR}], serializability can be shown to be equivalent to the condition that $\pi_{1,0}(X/X_1)=0$. 
This dihomotopical condition is generally more subtle than its classical homotopical counterpart, the condition that $\pi_{0,1}(X/X_1)=0$.

\subsection{2PL}
\textit{Two-phase locking} (2PL) is a protocol that requires each sequential process in a concurrent program goes through two phases in order: in the \textit{expansion phase}, each sequential process acquires all locks that it needs; in the \textit{shrinking phase} phase, each sequential process releases all locks that it had acquired.  
A standard result in formal verification is the following.

\begin{prop*}
  A concurrent computer program that follows 2PL is serializable.  
\end{prop*}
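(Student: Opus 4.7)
The plan is to combine the characterization of serializability as $\pi_{1,0}(X/X_1)=0$ established above with the long exact sequence for pairs from Example \ref{eg:les}. As a first step, I would identify $\pi_{1,0}(X/X_1)$ with the relative invariant $\pi_{1,0}(X,X_1,x_0)$ for $x_0=(0,\ldots,0)$, via an excision-type argument that exploits the well-behavedness of $X_1\ira X$ as a substream inclusion between substreams of the directed hypercube. Plugging this identification into the exact sequence
$$\pi_{1,0}(X_1,x_0)\ra\pi_{1,0}(X,x_0)\ra\pi_{1,0}(X,X_1,x_0)\ra\pi_{0,0}(X_1,x_0)\ra\pi_{0,0}(X,x_0),$$
and using that $X$ and $X_1$ are each path-connected substreams of $\I^{(n,0)}$ containing the extrema, reduces the problem to showing that every Moore dipath in $X$ from $x_0$ to the terminal state $(1,\ldots,1)$ is dihomotopic rel endpoints to a dipath lying entirely in $X_1$.

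The geometric heart of the argument is where the 2PL hypothesis enters. Under 2PL, each process acquires and releases each resource in exactly one contiguous temporal interval, so each forbidden open hyperrectangle $R\subset\I^{(n,0)}-X$ factors as $R=\prod_k I_k$ where $I_k$ is a proper subinterval of $(0,1)$ precisely when process $k$ holds the contested resource, and $I_k=(0,1)$ otherwise. Consequently the family of forbidden rectangles carries a natural partial order induced by coordinatewise comparison of projections, with no two rectangles "interlocked" in the geometric sense that would obstruct serialization. I would then induct on the number of forbidden rectangles, at each stage using a monotone retraction of a small neighborhood of a minimal rectangle $R$ onto the union of its faces lying in $X_1$ to dihomotope any passing dipath onto that face, before appealing to the inductive hypothesis on the stream obtained by filling in $R$.

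The main obstacle is verifying directedness of the retracting dihomotopy. Unlike a classical homotopy, a dihomotopy $h:f\sim g$ must factor through stream maps at every intermediate time, so the retraction must preserve the coordinatewise preorder pointwise. I expect the 2PL acquisition–release structure to provide a canonical forward-pointing retraction—intuitively, a path passing just above a forbidden rectangle can be pushed coordinatewise toward the top face without reversing any process's progress—but making this precise will likely require expressing the retraction as a Moore dipath-valued stream map in the spirit of Proposition \ref{prop:composition}, and then verifying monotonicity via the forgetful functor $\XSTREAMS\ra\XPRESETS$ together with Lemma \ref{lem:topological.tensors}.
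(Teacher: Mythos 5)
Your route is genuinely different from the paper's: you propose a direct geometric induction on the forbidden hyperrectangles (in the spirit of Fajstrup--Goubault--Raussen and Gunawardena), whereas the paper inducts on the number $n$ of processes via the quotients $X/X_{n-1}$ and runs a diagram chase through a ladder of long exact sequences for $\pi_{p,q}(\sphere^{(n,0)},X/X_{n-1})$ connected by degree comparison maps $\pi_{p+i,q}\ra\pi_{p,q+i}$, isolating the combinatorial input to a single surjectivity statement. However, your homotopy-theoretic reductions contain two steps that fail as stated. First, the identification of $\pi_{1,0}(X/X_1)$ with a relative invariant $\pi_{1,0}(X,X_1,x_0)$ by ``excision'' cannot lean on any well-behavedness of $X_1\ira X$: the paper explicitly observes that this inclusion is almost never an h-cofibration [Theorem \ref{thm:NDR}], precisely because points of $X-X_1$ are comparable under $\leqslant$ to points of $X_1$, so the pullback square against $\{0\}\ira\I$ required by that theorem cannot exist; no cofiber-versus-relative comparison is available, and even classically such a comparison needs more than cofibrancy. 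Second, the five-term sequence you write down decrements the \emph{temporal} index (from $\pi_{1,0}$ to $\pi_{0,0}$); that is the sequence of Example \ref{eg:di-les}, which the paper states is ``generally not even exact'' but only a chain complex. The exact sequences of Example \ref{eg:les}, built from h-homotopy pullbacks, decrement the spatial index $q$, and it is exactly to exploit those that the paper passes back and forth along the degree comparison maps. As written, your reduction to the statement about dipaths rests on an exactness that does not hold.

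The geometric core also has a concrete problem beyond the directedness issue you flag. A minimal forbidden hyperrectangle $R\subset\interior\I^{(n,0)}$ has no faces lying in $X_1$: points of $\partial R$ still have all $n$ coordinates in $(0,1)$ in general, while $X_1$ requires at least $n-1$ coordinates in $\{0,1\}$. So retracting a dipath onto ``the union of the faces of $R$ lying in $X_1$'' retracts onto the empty set; what the argument actually requires is a deformation all the way down to the one-dimensional skeleton $X_1$, and this is where the delicate cubical subdivisions and the specific 2PL constraint on the ordering of acquisition/release intervals enter (the paper defers to \cite[Lemma 7.4]{fajstrup2006algebraic} for precisely this step). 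Your observation that 2PL orders the forbidden rectangles coordinatewise is the right intuition, but as it stands neither the non-interlocking claim nor the existence of the forward-pointing retraction is established, and the filled-in stream used in your induction need not itself arise from a 2PL program, so the inductive hypothesis does not obviously apply to it.
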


Proofs are interpretable in dihomotopical terms \cite{fajstrup2006algebraic,gunawardena2001homotopy}. 
2PL imposes strong constraints on the geometry of the forbidden region $\I^{(n,0)}-X$.
Those constraints make it then possible to h-homotope an extrema-preserving dipath on $X$ so that its image lies in $X_1$.  
The construction of the requisite homotopies is delicate and combinatorial \cite{fajstrup2006algebraic}, requiring suitable cubical subdivisions of the state stream $X$ as well as dipaths on $X$ \cite[Theorem 4.1]{fajstrup2005dipaths}.  
However, these point-set constructions can be encapsulated into a diagram chase that illustrates the general strategy.  
It suffices to show that $\pi_{1,0}(X/X_{n-1})=0$ under 2PL by induction on the number $n$ of processes.  
Consider the commutative diagram
\begin{equation*}
  \begin{tikzcd}
		\pi_{0,2}(\sphere^{(n,0)})\ar[r] & \pi_{0,2}(\sphere^{(n,0)},X/X_{n-1})\ar[r] & \pi_{0,1}(X/X_{n-1})\ar[r] & \pi_{0,1}\sphere^{(n,0)}\\
		\pi_{1,1}(\sphere^{(n,0)})\ar[r]\ar[u] & \pi_{1,1}(\sphere^{(n,0)},X/X_{n-1})\ar[r]\ar[u] & \pi_{1,0}(X/X_{n-1})\ar[r]\ar[u] & \pi_{1,0}\sphere^{(n,0)}\ar[u]
  \\
	\pi_{2,0}(\sphere^{(n,0)})\ar[u]\ar[r] & \pi_{2,0}^-(\sphere^{(n,0)},X/X_{n-1})\ar[u]\ar[r] & \pi_{1,0}(X/X_{n-1})\ar[u,equals]\ar[r] & \pi_{1,0}\sphere^{(n,0)}
	\ar[u,equals]
	\end{tikzcd}
\end{equation*}
whose top and middle rows are long exact sequences induced by based h-fiber sequences [Example \ref{eg:les}], vertical arrows are degree comparison maps, and the bottom row is a chain complex defined as an analogue of the higher rows but with Moore paths replaced by Moore diapths in the construction [Example \ref{eg:di-les}].  
  
\subsection{Two processes}
The case $n=2$ reduces to classical homotopy theory.
In this case, $X$ is non-positively curved in a suitably directed sense \cite[Definition 1.28]{mimram2020directed} and hence the arrow $\pi_{1,0}(X/X_{n-1})\ra\pi_{0,1}(X/X_{n-1})$ above is injective on monoid generators \cite[Theorem 2.30]{mimram2020directed}.
The forbidden region $\sphere^{(2,0)}-X/X_1=\I^{(2,0)}-X$ is path-connected under 2PL \cite[p3]{gunawardena2001homotopy}.
Alexander Duality implies $H_1(X/X_1;\Z)$ vanishes.
The Hurewicz Theorem implies $\pi_{0,1}(X/X_1)$, the fundamental group of a bouquet of spheres and hence free, vanishes.
Hence $\pi_{1,0}(X/X_1)$ vanishes.  

\subsection{More processes}
The case $n>2$ is more subtle. 
We have $\pi_{1,0}\sphere^{(n,0)}=0$ by directed simplicial approximation \cite[Theorem 8.14]{krishnan2015cubical}.
The middle horizontal arrow in the middle row is surjective by exactness.
It suffices to show $\pi_{1,1}\sphere^{(n,0)}\ra\pi_{1,1}(\sphere^{(n,0)},X/X_{n-1})$ is surjective by exactness.
Therefore consider an element
$$[\gamma]\in\pi_{1,1}(\sphere^{(n,0)},X/X_{n-1}),$$
represented by a Moore path $\gamma$ in $\Omega^{(1,0)}_\star\sphere^{(n,0)}$, starting with the basepoint and ending with a based map taking values in $X/X_{n-1}$.  
We can take $\gamma$ to lift to a Moore path $\tilde\gamma$ of dipaths on $\I^{(n,0)}$ without loss of generality by an application of directed simplicial approximation \cite[Theorems 7.1, 8.14]{krishnan2015cubical}.  
We can assume $\tilde\gamma=\gamma_1\gamma_2^{\dagger}\cdots\gamma_{2k}^{\dagger}\gamma_{2k+1}$ for choices $\gamma_1,\gamma_2,\ldots,\gamma_{2k+1}$ of Moore dipaths [Example \ref{eg:moore.dipaths}], where $\zeta^\dagger$ denotes the reverse of a Moore path $\zeta$ by $\sphere^{(1,0)}$ compact and $\I^{(n,0)}$ realizable as an edge-oriented cubical complex \cite[Theorem 8.14]{krishnan2015cubical}.  
If $k>0$, it is possible to use the lattice structure on $\I^{(n,0)}$ to construct a Moore dipath $\delta$ that ends in a dipath having image in $\partial\I^{(n,0)}$, so that $\gamma_1\delta$ represents an element in $\pi_{1,1}(\sphere^{(n,0)})$.
Without changing $[\gamma]$, we can replace $\gamma_1$ with $\gamma_1\delta$ and $\gamma_2$ with $\gamma_2\delta$.
Thus without changing the image of $[\gamma]$ in the cokernel of the left arrow in the middle row, we can take $k=0$ by an inductive argument.  
In particular, we can take $[\gamma]$ to come from $\pi_{2,0}^-(\sphere^{(n,0)},X/X_{n-1})$ without loss of generality.

The heart of the argument is to show that $[\gamma]$ further comes from $\pi_{2,0}\sphere^{(n,0)}$.
It suffices to take $\tilde\gamma$ to pass through the forbidden region; otherwise $\tilde\gamma$ can be replaced with $\tilde\gamma^{-1}\tilde\gamma$ and hence a constant Moore dipath.  
Then $\mathrm{cod}_{\Tau \I^{(n,0)}}\tilde\gamma$ represents an execution which, beyond a certain time, occurs in the shrinking phase for all the processors (cf. \cite[Lemma  7.4]{fajstrup2006algebraic}) under 2PL. 
It can then be shown that $\tilde\gamma$ itself can be replaced, without loss of generality, by its extension to a Moore dipath whose codomain has image in $\partial\I^n$ by the special geometry of the forbidden region \cite[Lemma  7.4]{fajstrup2006algebraic}.
In other words, $\gamma$ represents an element in $\pi_{2,0}\sphere^{(n,0)}$. 

\section{Conclusion}\label{sec:conclusion}
The goal is to extend these kinds of diagram chases for more computable invariants. 
The classifying stream construction [Example \ref{eg:classifying.streams}] extends to an endofunctor on commutative monoid objects in $\STREAMS$.  
Iterations of the based $p$-fold classifying stream construction with the $q$-fold classifying space construction on discrete commutative monoids represent a bigraded monoid-valued refinement $\tilde{H}^{p,q}((X,x);M)$ of ordinary reduced cohomology on a based stream $(X,x)$, taking coefficients in a discrete commutative monoid $M$.
Based h-cofiber sequences in $\STREAMS_\star$ then induce long exact sequences of such directed cohomology monoids.
On streams of interest in state space analysis, directed simplicial approximation \cite[Theorems 7.1, 8.14]{krishnan2015cubical} can be used to prove an excisive property for $\tilde{H}^{p,q}$.
We hope such excisive properties can greatly simplify the kinds of arguments about relative dihomotopy demonstrated in the previous section.  

\section{Acknowledgements}
This work was supported by AFOSR grant FA9550-16-1-0212.
The authors thank the anonymous refeee and the editor for helpful comments, corrections, and suggestions.  

\appendix
\addcontentsline{toc}{section}{Appendix}
\addtocontents{toc}{\protect\setcounter{tocdepth}{0}}


\bibliography{h-streams}{}
\bibliographystyle{plain}
\end{document}